\newtheorem{lemma}{Lemma}[section]
\newtheorem{example}{Example}
\newtheorem{theorem}{Theorem}[section]
\newtheorem{proposition}{Proposition}[section]
\providecommand{\keywords}[1]{\textbf{\textit{Keywords:}} #1}
\let\ep\varepsilon
\let\eps\varepsilon
\def\ds{\mathrm{d}\mathbf{s}}
\newcommand{\R}{\mathbb R}
\newcommand{\bH}{\mathbf H}
\newcommand{\bP}{\mathbf P}
\newcommand{\bn}{\mathbf n}
\newcommand{\bp}{\mathbf p}
\newcommand{\br}{\mathbf r}
\newcommand{\bs}{\mathbf s}
\newcommand{\blf}{\mathbf f}
\newcommand{\bw}{\mathbf w}
\newcommand{\bx}{\mathbf x}
\newcommand{\F}{\mathcal F}
\newcommand{\Div}{\mbox{\rm div}}
\newcommand{\unablah}{{\nabla}_{\Gamma_h}}
\newcommand{\rd}{\mathrm{d}}
\definecolor{darkred}{rgb}{.7,0,0}
\definecolor{green}{rgb}{0,0.7,0}
\def\d{\hspace{2pt}{\rm d}}
\def\nat{\nabla_{\Gamma}}
\def\nath{\nabla_{\Gamma_h}}
\def\ipue{(I_h u^e)|_{\Gamma_h}}
\begin{document}

\title{An adaptive octree finite element method for PDEs posed on surfaces}
\author{
Alexey Y. Chernyshenko\thanks{
Institute of Numerical Mathematics, Russian Academy of Sciences, Moscow 119333}
\and
Maxim A. Olshanskii\thanks{Department of Mathematics, University of Houston, Houston, Texas 77204-3008 {\tt molshan@math.uh.edu}}
}

\maketitle

\markboth{}{}

\begin{abstract}
The paper develops a finite element method for  partial differential equations
posed on hypersurfaces in $\mathbb{R}^N$, $N=2,3$. The method uses traces of bulk finite element
functions on a surface embedded in a volumetric domain. The bulk finite element space is defined on
an octree grid which is locally refined or coarsened  depending on error indicators and estimated
values of the surface curvatures. The cartesian structure of the bulk mesh leads to easy and efficient
adaptation process, while the trace finite element method makes fitting the mesh to the surface
unnecessary. The number of degrees of freedom involved in computations is consistent with the
two-dimension nature of surface PDEs.
No parametrization of the surface is required; it can be given implicitly by a level set function. In practice, a variant of the marching cubes method is used to recover the surface with the second order accuracy.  
We prove the optimal order of accuracy for the trace finite element method in $H^1$ and $L^2$ surface norms for a problem with smooth solution and quasi-uniform mesh refinement.
Experiments with less regular problems demonstrate optimal convergence with respect to the number of degrees of freedom,
if  grid adaptation is based on an appropriate error indicator.  The paper shows results of numerical experiments for
a variety of geometries and problems, including advection-diffusion equations on surfaces.
Analysis and numerical results of the paper suggest that combination of cartesian adaptive meshes and the  unfitted (trace) finite elements provide simple, efficient, and reliable tool for numerical treatment of PDEs posed on surfaces.
 \end{abstract}

\keywords{
 Surface, PDE, finite element method, traces, unfitted grid, adaptivity, octree grid,  level set
}



\section{Introduction}
Partial differential equations posed on surfaces arise in mathematical models for many natural phenomena: diffusion along grain boundaries \cite{grain2}, lipid interactions in biomembranes \cite{membrains1}, and transport of surfactants on multiphase flow interfaces \cite{GrossReuskenBook}, as well as in many engineering and bioscience applications: vector field visualization \cite{vector}, textures synthesis \cite{texture1}, brain warping \cite{imaging2}, fluids in lungs \cite{lungs} among others.
Thus, recently there has been a significant increase of interest  in developing and analyzing numerical methods for  PDEs on surfaces.

One natural approach to solving PDEs on surfaces numerically is based on surface triangulation.  In this class of methods, one typically assumes that a parametrization of a surface is given and the surface is approximated by a family of consistent  regular triangulations. It is common to assume that all nodes of the triangulations lie on the surface. The analysis of a finite element method based on surface triangulations was first done in~\cite{Dziuk88}. To avoid surface triangulation and remeshing (if the surface  evolves), another approach was taken in \cite{BCOS01}: It was proposed to extend a partial differential equation from the surface to a set of positive Lebesgue measure in $\R^N$. The resulting PDE is then solved in one dimension higher, but can be solved on a mesh that is unaligned to the surface.
A surface is allowed  to be defined implicitly as a zero set of  a given level set function. However, the resulting bulk elliptic or parabolic equations are degenerate, with no diffusion acting  in the direction normal to the surface. Versions of the method, where only an $h$-narrow band around the surface is used to define a finite element method, were studied in  \cite{Alg2,OlshSafin}. An overview of finite element methods for surface PDEs and more references can be found in the recent review paper~\cite{DE2013}.

Another unfitted finite element method for  equations posed on surfaces was introduced in~\cite{ORG09,OlsR2009}. That method does not use an extension of the surface partial differential equation.  It is instead based on a restriction (trace) of the outer  finite element spaces to a surface. The trace finite element method does not need parametrization or fitting a mesh
to the surface and avoids some well known pitfalls of PDE-extension based methods related to the bulk equation degeneracy and numerical boundary conditions. Since only those bulk elements are involved in computations which are intersected by a surface, the number of active degrees of freedom is consistent with the dimensionality of the surface problem. The trace finite element method is also very natural approach when one needs to solve a system of partial differential equations that couples bulk domain effects with interface (or surface) effects, the situation which occurs in a number of applications~\cite{Bonito,ER2013}. Therefore, recently several authors developed the trace finite element method in various directions: In \cite{Grande,HLZ2013,ORXsinum,ORsinum2014} the method was extended and analysed for the case of evolving surfaces; Papers \cite{BHLZ,GOR2014} considered surface-bulk coupled problems and \cite{ORXimanum} treated singular-perturbed surface advection-diffusion equation; An analysis of  a higher order trace finite elements was given in \cite{Reusken2014}; A posteriori estimates and adaptivity were studied in \cite{DemlowOlsh}; Versions of the method with improved algebraic properties  were introduced in \cite{Alg1,Alg2}. All these studies considered continuous piecewise polynomial (typically $P_1$) bulk finite elements with respect to a regular tetrahedral subdivision of a volumetric domain or a regular triangulation in 2D case.

In the present paper, the trace finite element method is developed for octree bulk meshes.
The cartesian structure and embedded hierarchy of octree grids makes mesh adaptation, reconstruction and data access
fast and easy, which is not always the case for tetrahedral meshes. For these reasons, octree meshes became a common tool
in  image processing, the visualization of amorphous medium, free surface and multi-phase flows computations
and other applications where non-trivial geometries occur, see, for example, \cite{Losasso:04,Meagher,Szeliski,Pop09,Strain,ViscPlast}. Thus, employing octree grids for numerical solution of PDEs on surfaces one benefits from their local adaptation properties  in the case fine surface structures or solution
with a singularity. Moreover, the resulting tool for solving surface PDEs  is ready for coupling with many of existing 
octree-based methods (not restricted to finite element methods) for solving bulk problems. Some of these octree-based solvers are parts of the publicly available software, e.g., \cite{DEAL2,Pop03}.
One intrinsic property of octree grids, however, is only
the first order approximation of curved geometries. It appears that the trace finite element method is the right tool
to deal with this potential shortcoming. We shall see that a surface may cut the octree mesh in an arbitrary way.
The finite element method is unfitted and uses a second order surface recovery with a variant of the marching cubes algorithm~\cite{MCM}. As a formal demonstration of the method accuracy, we  prove the $O(h)$ error estimate in the $H^1(\Gamma)$-norm and the $O(h^2)$ error estimate in the $L^2(\Gamma)$-norm on a smooth closed surface $\Gamma$. Here $h$ is the maximum side length among all cubes intersected by the recovered (discrete) surface. To access the local error, we introduce an error indicator based on elementwise residual and surface curvature. A grid adaptation strategy based on this indicator leads to optimal convergence of numerical solution with respect to the number of degrees of freedom.

In the paper, we consider regular Laplace-Beltrami type problems as well as singular-perturbed advection-diffusion
equations on surfaces. The latter case is of interest for a number of applications such as the transport of surfactants along fluidic interfaces or a pollutant transport in fractured media. For the advection dominated problem we consider a stabilized variant of the trace finite element method as well as layer fitted meshes.
The remainder of the paper is organized as follows.  Section~\ref{s_setup} sets up a problem.  
In section~\ref{s_FEM}, we introduce a  finite element method. Analysis of the finite element method  is presented in section~\ref{s_error}. It includes a well-posedeness result, a priori and a posteriori error estimates. Section~\ref{s_numer} collects the result of several numerical experiments. Finally, section~\ref{s_concl} contains some closing remarks.

\section{Problem formulation}\label{s_setup}

Let $\Omega$ be an open domain in $\mathbb{R}^3$ and $\Gamma$ be a connected $C^2$ compact
hyper-surface contained in $\Omega$.
For a sufficiently smooth function $g:\Omega\rightarrow \mathbb{R}$ the tangential derivative
on $\Gamma$ is defined by
\begin{equation}
 \nabla_{\Gamma} g=\nabla g-(\nabla g\cdot \bn)\bn,\label{e:2.1}
\end{equation}
where $\bn$ denotes the unit normal to $\Gamma$. Denote  by $\Div_\Gamma=\nabla_\Gamma\cdot$ the surface divergence operator and by $\Delta_{\Gamma}=\nabla_\Gamma\cdot\nabla_\Gamma$ the Laplace-Beltrami operator on $\Gamma$.
The simplest elliptic PDE on $\Gamma$ is the classical Laplace-Beltrami equation
\begin{equation}
  -\Delta_{\Gamma} u=f\quad\text{on}~~\Gamma,\quad\text{with}~\int_\Gamma f\,\ds=0. \label{eq:LB}
\end{equation}

Although, \eqref{eq:LB} is an interesting problem arising in many applications, we shall
consider a slightly more general problem on $\Gamma$. To motivate it, consider $\mathbf{w}:\Omega\rightarrow\mathbb{R}^3$, a given  velocity field in $\Omega$. If the surface  $\Gamma$ evolves with a normal velocity of $\bw\cdot\bn$ (e.g., $\Gamma$ is passively advected by the velocity field $\bw$), then the conservation of a scalar quantity $u$ with a diffusive flux on $\Gamma(t)$ leads to the surface PDE, see, e.g., \cite{DE2013}:
\begin{equation}
\dot{u} + (\Div_\Gamma\bw)u -\varepsilon\Delta_{\Gamma} u=0\quad\text{on}~~\Gamma(t),
\label{transport}
\end{equation}
where $\dot{u}$ denotes the advective material derivative, $\eps>0$ is the diffusion coefficient.
If we assume $\bw\cdot\bn=0$, i.e. the advection velocity is everywhere tangential to the surface, and the surface is steady in the geometric sense, then the surface advection-diffusion equation takes the form:
\begin{equation}
u_t+ \mathbf{w}\cdot\nabla_{\Gamma} u+ (\Div_\Gamma\bw)u -\varepsilon\Delta_{\Gamma} u=0
\quad\text{on}~~\Gamma.
\label{e:2.2}
\end{equation}
Although the methodology of the paper is applied to the parabolic equations \eqref{e:2.2},
we shall present the method and analysis  for the stationary problem:
\begin{equation}
  -\varepsilon\Delta_{\Gamma} u+\mathbf{w}\cdot\nabla_{\Gamma} u + (c+\Div_\Gamma\bw)\,u =f\quad\text{on}~~\Gamma, \label{problem}
\end{equation}
with  $f\in L^2(\Gamma)$ and $c=c(\bx)\in L^\infty(\Gamma)$.
If $\bw=0$ and $c=0$, we recover the classical Laplace-Beltrami problem \eqref{eq:LB}. Otherwise we assume $\bw\in H^{1,\infty}(\Gamma)$.

We need the following identity for integration by parts over $\Gamma$:
\begin{equation}\label{intByParts}
\int_\Gamma q (\Div_\Gamma \blf) + \blf\cdot\nabla_\Gamma q\,d\bs=\int_\Gamma \kappa (\blf\cdot\bn)q\,d\bs.
\end{equation}
for smooth $q$ and vector field $\blf$,  where $\kappa = \Div_\Gamma \bn$ is the (doubled) surface mean curvature.
Applying \eqref{intByParts} and $\mathbf{w}\cdot\bn=0$ one finds the identity
\[
\int_{\Gamma}(\bw\cdot\nat u) v\, \ds=-\int_{\Gamma}(\bw\cdot\nat v) u\,\ds
-\int_{\Gamma}(\Div_\Gamma\bw) \, uv\, \ds.
\]
Integrating \eqref{problem} over $\Gamma$ and applying the above identity with $v=1$, one finds that for $c=0$ the source term in \eqref{problem} should satisfy the zero mean constraint $\int_\Gamma f\,\ds=0$.

  Introduce the bilinear form and the functional:
\begin{equation*}
\begin{aligned}
a(u,v)&:=\int_{\Gamma}\eps\nabla_{\Gamma} u\cdot\nabla_{\Gamma} v  - (\bw\cdot\nat v) u
+c\, uv\, \ds, \\
f(v)&:=\int_{\Gamma}fv\, \ds.
\end{aligned}
\end{equation*}
The weak formulation of  \eqref{problem} is as follows: Find $u\in V$ such that
\begin{equation}
a(u,v)=f(v) \qquad \forall v\in V,\label{weak}
\end{equation}
with
$$V= \left\{
\begin{array}{ll}
 \{v\in H^1(\Gamma)\ |\ \int_{\Gamma} v\, \ds=0\} & \hbox{if } c+\Div_\Gamma\bw= 0,\\
H^1(\Gamma) & \hbox{ otherwise}.
     \end{array}
     \right.
$$
For functions satisfying zero integral mean condition, the following Poincare's  inequality holds:
\begin{equation}
\|v\|_{L^2(\Gamma)}^2 \le C_P \|\nabla_\Gamma v\|_{L^2(\Gamma)}^2\quad
\forall~v\in V,~\mbox{s.t.}~~ \int_{\Gamma} v\, \ds=0.
\label{FdrA}
\end{equation}

We shall assume
\begin{equation}\label{A1}
\begin{array}{ll}
\eps C_P^{-1}-\sup_{\bx\in\Gamma}|\Div_\Gamma\bw(\bx)|\ge c_0>0 & \hbox{if } c+\Div_\Gamma\bw= 0,\\[1ex]
c+\frac12\Div_\Gamma\bw\ge c_0>0~~\text{on}~~\Gamma& \mbox{otherwise}.
     \end{array}
 \end{equation}
If a time stepping scheme is used for \eqref{e:2.2} and $c$ is proportional to the reciprocal of the time step, then
the assumption is not restrictive.

The Lax-Milgram lemma and \eqref{FdrA} immediately yield the well-posedness result for \eqref{weak}. A higher
smoothness of the solution follows from a regularity  result for the Laplace-Beltrami equation in \cite{Aubin}.
\begin{proposition}
Assume \eqref{A1}, then  there exists a unique solution of \eqref{weak}, satisfying
\begin{equation}\label{apr_est1}
\frac{c_0}2\|u\|_{L^2(\Gamma)}^2+\eps\|\nabla_\Gamma u\|_{L^2(\Gamma)}^2\le 2c_0^{-1}\|f\|_{L^2(\Gamma)}^2
\end{equation}
and
\[
\|u\|_{H^2(\Gamma)}^2\le C_2\|f\|_{L^2(\Gamma)}^2,
\]
with a constant $C_2$ depending on $\eps$, $\bw$, $c_0$, and $\Gamma$.
\end{proposition}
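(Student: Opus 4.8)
The plan is to get existence, uniqueness, and the energy bound \eqref{apr_est1} from the Lax--Milgram lemma applied on the Hilbert space $V$, and to derive the $H^2$-estimate by recasting \eqref{problem} as a Laplace--Beltrami equation with an $L^2$ right-hand side and quoting the elliptic regularity result of \cite{Aubin}. First note that $V$ is a closed subspace of $H^1(\Gamma)$ --- when $c+\Div_\Gamma\bw=0$ it is the kernel of the bounded functional $v\mapsto\int_\Gamma v\,\ds$ --- hence a Hilbert space, and by \eqref{FdrA} the seminorm $\|\nabla_\Gamma\cdot\|_{L^2(\Gamma)}$ is an equivalent norm on it in that case.

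The one computation that needs care is coercivity of $a(\cdot,\cdot)$ on $V$. Putting $u=v$ in the integration-by-parts identity stated just before \eqref{weak} gives $\int_\Gamma(\bw\cdot\nat v)\,v\,\ds=-\tfrac12\int_\Gamma(\Div_\Gamma\bw)\,v^2\,\ds$, so that
\[
a(v,v)=\eps\|\nabla_\Gamma v\|_{L^2(\Gamma)}^2+\int_\Gamma\bigl(c+\tfrac12\Div_\Gamma\bw\bigr)v^2\,\ds .
\]
In the generic case $c+\Div_\Gamma\bw\neq0$ the second line of \eqref{A1} directly yields $a(v,v)\ge\eps\|\nabla_\Gamma v\|_{L^2(\Gamma)}^2+c_0\|v\|_{L^2(\Gamma)}^2$. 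In the zero-mean case $c+\tfrac12\Div_\Gamma\bw=-\tfrac12\Div_\Gamma\bw$, hence $a(v,v)\ge\eps\|\nabla_\Gamma v\|_{L^2(\Gamma)}^2-\tfrac12\sup_{\Gamma}|\Div_\Gamma\bw|\,\|v\|_{L^2(\Gamma)}^2$; writing $\eps\|\nabla_\Gamma v\|_{L^2(\Gamma)}^2=\tfrac12\eps\|\nabla_\Gamma v\|_{L^2(\Gamma)}^2+\tfrac12\eps\|\nabla_\Gamma v\|_{L^2(\Gamma)}^2$ and bounding the second half below by $\tfrac12\eps C_P^{-1}\|v\|_{L^2(\Gamma)}^2$ via \eqref{FdrA}, the first line of \eqref{A1} gives $a(v,v)\ge\tfrac12\eps\|\nabla_\Gamma v\|_{L^2(\Gamma)}^2+\tfrac{c_0}{2}\|v\|_{L^2(\Gamma)}^2$. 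Continuity of $a(\cdot,\cdot)$ is immediate from boundedness of the coefficients ($\eps$ constant, $\bw\in H^{1,\infty}(\Gamma)\subset L^\infty(\Gamma)$, $c\in L^\infty(\Gamma)$) and Cauchy--Schwarz, and $|f(v)|\le\|f\|_{L^2(\Gamma)}\|v\|_{L^2(\Gamma)}$; Lax--Milgram then produces the unique solution $u\in V$.

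For \eqref{apr_est1} I would test \eqref{weak} with $v=u$ and use Young's inequality, $f(u)\le\|f\|_{L^2(\Gamma)}\|u\|_{L^2(\Gamma)}\le\tfrac{c_0}{2}\|u\|_{L^2(\Gamma)}^2+\tfrac{1}{2c_0}\|f\|_{L^2(\Gamma)}^2$. In the generic case the coercivity bound absorbs $\tfrac{c_0}{2}\|u\|_{L^2(\Gamma)}^2$ and leaves $\eps\|\nabla_\Gamma u\|_{L^2(\Gamma)}^2+\tfrac{c_0}{2}\|u\|_{L^2(\Gamma)}^2\le\tfrac{1}{2c_0}\|f\|_{L^2(\Gamma)}^2$, which is stronger than \eqref{apr_est1}. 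In the zero-mean case the same test gives $\eps\|\nabla_\Gamma u\|_{L^2(\Gamma)}^2\le c_0^{-1}\|f\|_{L^2(\Gamma)}^2$, and the missing $L^2(\Gamma)$-contribution is recovered from \eqref{FdrA} together with the inequality $c_0 C_P\le\eps$ that \eqref{A1} entails, again yielding \eqref{apr_est1}.

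Finally, for the $H^2$-estimate, rewrite \eqref{problem} as the Laplace--Beltrami equation $-\eps\Delta_\Gamma u=\widetilde f$ with $\widetilde f:=f-\bw\cdot\nabla_\Gamma u-(c+\Div_\Gamma\bw)u$. Since $u\in H^1(\Gamma)$, $\bw\in H^{1,\infty}(\Gamma)$ (so $\Div_\Gamma\bw\in L^\infty(\Gamma)$) and $c\in L^\infty(\Gamma)$, one has $\widetilde f\in L^2(\Gamma)$ with $\|\widetilde f\|_{L^2(\Gamma)}\le C\bigl(\|f\|_{L^2(\Gamma)}+\|u\|_{H^1(\Gamma)}\bigr)$. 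The elliptic regularity theory for the Laplace--Beltrami operator on the closed $C^2$-surface $\Gamma$ (\cite{Aubin}) then gives $u\in H^2(\Gamma)$ with $\|u\|_{H^2(\Gamma)}\le C\bigl(\|\Delta_\Gamma u\|_{L^2(\Gamma)}+\|u\|_{L^2(\Gamma)}\bigr)$, and combining this with the bound on $\|\widetilde f\|_{L^2(\Gamma)}$ and with \eqref{apr_est1} to control $\|u\|_{H^1(\Gamma)}$ by $\|f\|_{L^2(\Gamma)}$ yields the claimed estimate with $C_2=C_2(\eps,\bw,c_0,\Gamma)$. The hard part here is less difficulty than dependence on external input: coercivity in the Laplace--Beltrami regime $c+\Div_\Gamma\bw=0$ must be squeezed out of \eqref{A1} through the Poincar\'e inequality because the convective and zeroth-order terms are sign-indefinite, and the whole $H^2$-bound rests on the cited global regularity result for a merely $C^2$ surface.
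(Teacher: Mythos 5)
Your proposal is correct and follows exactly the route the paper takes (the paper compresses it to one sentence: Lax--Milgram plus the Poincar\'e inequality \eqref{FdrA} for well-posedness, and the regularity result of \cite{Aubin} for the $H^2$ bound); your coercivity computation via the skew-symmetrization identity, the case split on \eqref{A1}, and the recovery of the $L^2$ term in the zero-mean case from $c_0 C_P\le\eps$ all check out and even reproduce the stated constant $2c_0^{-1}$ in \eqref{apr_est1}.
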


\section{The trace finite element method} \label{s_FEM}
 In this section we review the trace FEM. The method  developed in this section is an extension  of the trace FEM  time introduced in \cite{ORG09}.

\subsection{The idea of the method} Assume we are given a polyhedral subdivision $\mathcal{T}_h$ of the bulk domain $\Omega$ and $V_h$ is a $H^1(\Omega)$ conforming finite element space. Consider all traces of functions from $V_h$ on $\Gamma$ and denote the
resulting space of surface functions by $V_h^\Gamma$. Now one substitutes $V$ by $V_h^\Gamma$ in the weak formulation \eqref{weak} to obtain a finite element formulation. If $\Gamma$ is given implicitly or no parametrization of $\Gamma$ is known, a problem of integration of finite element functions over $\Gamma$ arises. Hence in practice, $\Gamma$ in the finite element formulation is replaced by an approximate (``discrete'') surface $\Gamma_h$ such that integration over $\Gamma_h$ is feasible. For example, $\Gamma_h$ is piecewise polygonal.
Trace space for $V_h$ is now defined over $\Gamma_h$ rather than $\Gamma$ and problem data is extended from $\Gamma$ to $\Gamma_h$. Substituting $\Gamma$ by $\Gamma_h$ introduces a geometric error in the method that has to be quantified. It is remarkable,  however, that a suitable $\Gamma_h$ can be easily constructed for an implicitly given surface without any knowledge of parametrization. Moreover, in some applications $\Gamma$ is not known, and $\Gamma_h$ is recovered from a solution to an (discretized) equation. The trace finite element method is perfectly suited for such a situation.

It is clear from this general description that both $\Gamma$ and $\Gamma_h$ may cut the bulk mesh in an arbitrary way.
So the trace finite element method can be related to the family of unfitted finite elements, well developed  for equations posed in bulk domains, such as XFEM or cut-FEM. To build a basis or a frame in $V^\Gamma_h$, one may consider
traces of basis functions from $V_h$ on $\Gamma_h$.  It is also clear that only those  bulk
basis  functions are active that have their support intersected by $\Gamma_h$.

Further in the paper, we study the method if $\mathcal{T}_h$ is a cubic octree mesh, $V_h$ is a space of piecewise trilinear continuous finite elements, and $\Gamma_h$ is reconstructed by a variant of marching cubes method from a piecewise trilinear interpolant to a level set function of $\Gamma$.

\subsection{The method}
Consider an octree cubic mesh $\mathcal{T}_h$ covering the bulk domain $\Omega$. We assume that the mesh
is gradely refined, i.e. the sizes of two neighbouring cubes differ at most by a factor of 2. Such octree grids are also known as balanced. The method applies for unbalanced octrees, but in our analysis and experiments we use balanced grids.   

Denoted by $\Gamma_h$ a given polygonal approximation of $\Gamma$. We assume that
$\Gamma_h$ is a $C^{0,1}$ surface without  boundary and $\Gamma_h$ can be partitioned in planar triangular segments:
\begin{equation} \label{defgammah}
 \Gamma_h=\bigcup\limits_{T\in\mathcal{F}_h} T,
\end{equation}
where $\mathcal{F}_h$ is the set of all  triangular segments $T$.
Without loss of generality we assume that for any $T\in\mathcal{F}_h$ there is only \textit{one}
cube $S_T\in\mathcal{T}_h$ such that $T\subset S_T$ (if $T$ lies on a side shared by two cubes, any of these
two cubes can be chosen as $S_T$).

In practice, we construct $\Gamma_h$ as follows. Let $\phi_h$ be a  piecewise trilinear continuous function with respect
to the octree grid $\mathcal{T}_h$ and consider its zero level set
\[
\widetilde{\Gamma}_h:=\{\bx\in\Omega\,:\, \phi_h(\bx)=0 \}.
\]
We assume that $\widetilde{\Gamma}_h$ is an approximation to $\Gamma$. This  is  a reasonable assumption if $\phi_h$ is
an interpolant to $\phi$, a level set function of $\Gamma$; one example of $\phi$ is the signed distance function for $\Gamma$. To define $\phi_h$ one only should be able to prescribe in each node an approximate distance to $\Gamma$. Alternatively, in some applications, $\phi_h$ is recovered from a solution of a discrete indicator function equation (e.g. in the level set or the volume of fluid methods), without any direct knowledge of $\Gamma$.

Once $\phi_h$ is known, we recover $\Gamma_h$ by the cubical marching squares method from \cite{MCM2} (a variant of the
very well-known marching cubes method). The method provides a triangulation of $\widetilde{\Gamma}_h$ within each
cube such that the global triangulation is continuous, the number of triangles within each cube is finite and bounded
by a constant independent of $\widetilde{\Gamma}_h$ and a number of refinement levels. Moreover, the vertices of triangles from $\mathcal{F}_h$ are lying on $\widetilde{\Gamma}_h$.

An illustration of a bulk mesh and a surface triangulation is given in Figure~\ref{fig:tri}. The mesh shown in this figure was obtained by representing a torus $\Gamma$   implicitly by its signed distance function, constructing the piecewise trilinear continuous interpolant of this distance function  and then applying the cubical marching squares method for reconstructing $\Gamma_h$ from the zero level of this interpolant.
\begin{figure}[ht!]
\begin{center}
\begin{picture}(420,160)(0,0)
\put(0,0){\includegraphics[trim=10 28 10 10,clip,width=0.45\textwidth]{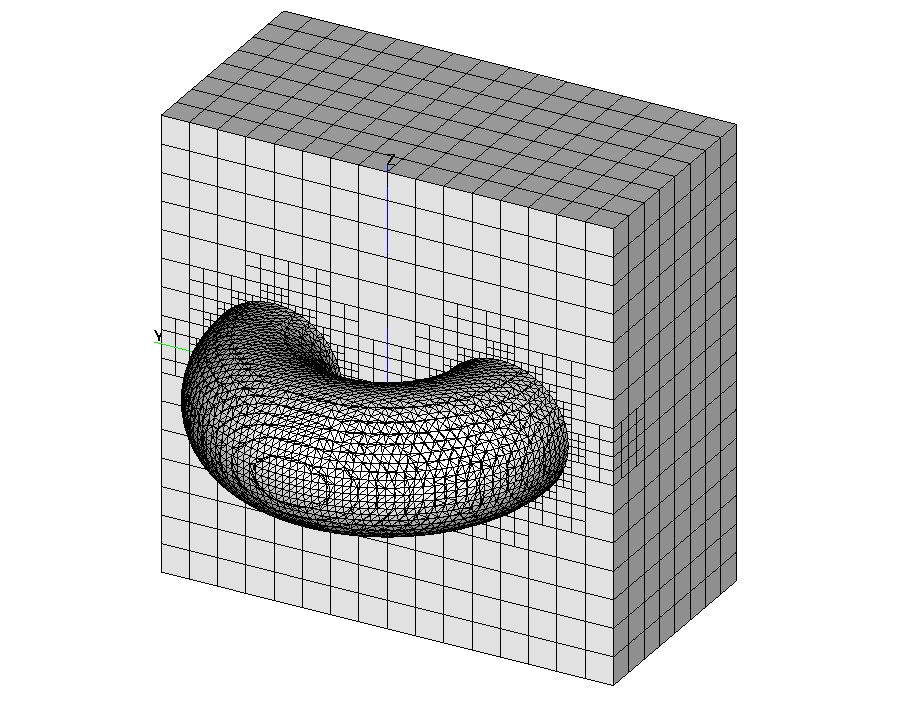}} 
\put(230,20){\includegraphics[width=0.33\textwidth]{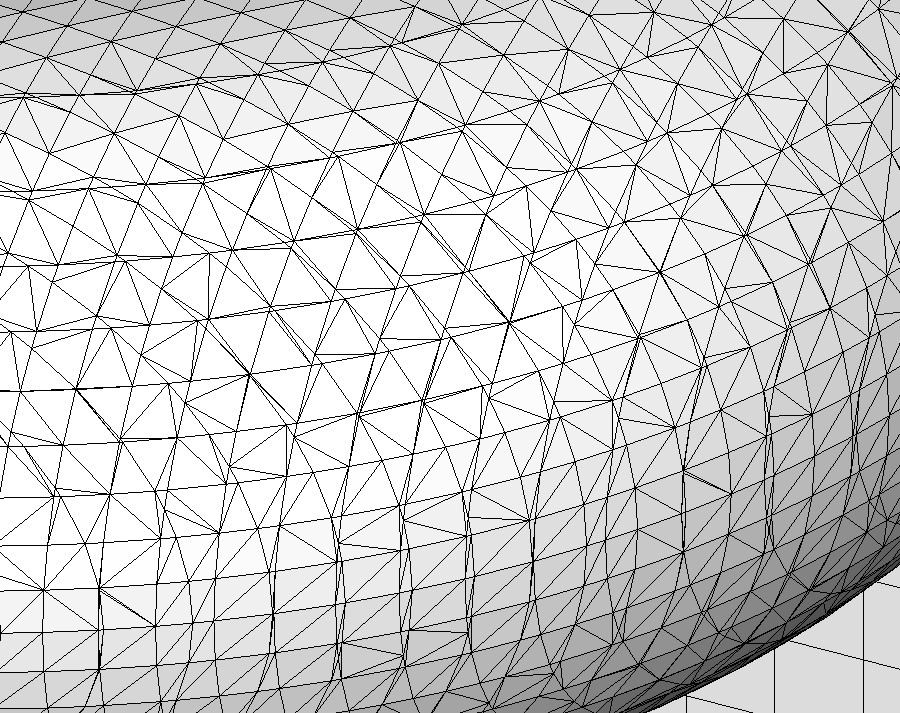}}
\end{picture}
\caption{Left: A cutaway of a bulk domain shows the bulk octree mesh three times refined towards the surface and the resulting   approximate surface $\Gamma_h$ for the part of a torus. Right: The zoom-in of the resulting surface triangulation. The triangulation does not satisfy a  minimal angle condition.}
\label{fig:tri}
\end{center}
\end{figure}

Note that the resulting triangulation $\mathcal{F}_h$  is \emph{not} necessarily  regular, i.e. elements from $T$ may have
very small internal angles and the size of neighboring triangles can vary strongly, cf.~Figure~\ref{fig:tri} (right).
Thus, $\Gamma_h$ is not a ``triangulation of $\Gamma$'' in the  usual sense (an $O(h^2)$ approximation of $\Gamma$, consisting of \textit{regular} triangles).
\smallskip

The surface finite element space is \textit{the space of traces on $\Gamma_h$ of all piecewise trilinear continuous functions with respect to the outer triangulation $\mathcal{T}_h$}.
This can be formally defined as follows.

Consider now the volumetric finite element space of all piecewise trilinear continuous functions with respect to the bulk octree mesh $\mathcal{T}_h$:
\begin{equation}
 V_h:=\{v_h\in C(\Omega)\ |\ v|_{S}\in Q_1~~ \forall\ S \in\mathcal{T}_h\},\quad\text{with}~~
 Q_1=\mbox{span}\{1,x_1,x_2,x_3,x_1x_2,x_1x_3,x_2x_3,x_1x_2x_3\}.
 \label{e:2.6}
\end{equation}
$V_h$ induces the following space on $\Gamma_h$:
\begin{equation}
 V_h^{\Gamma}:=\{\psi_h\in H^1(\Gamma_h)\ |\ \exists ~ v_h\in V_h\  \text{such that }\ \psi_h=v_h|_{\Gamma_h}\}.
\label{e:fem-space}
\end{equation}

Given the surface finite element space $V_h^{\Gamma}$,  the finite element
discretization of \eqref{problem} is as follows:   Find $u_h\in V_h^{\Gamma}$ such that
\begin{equation}
\eps\int_{\Gamma_h}\nath u_h\cdot\nath v_h\, - (\bw_h\cdot\nath v_h) u_h
+c_h \, u_hv_h\, \ds_h =\int_{\Gamma_h}f_h v_h\, \ds_h \label{FEM}
\end{equation}
for all $v_h\in V_h^{\Gamma}$. Here $\mathbf{w}_h$, $c_h$ and $f_h$ are some approximations of the problem data on $\Gamma_h$.
A well-posedness result for \eqref{FEM} will be proved in the next section.

\subsection{Variants of the method}
Here we discuss several extensions of the trace finite element method \eqref{FEM}, which can be advantageous
in some situations. One obvious update of the method is the following one. Define a subdomain $\omega_h$ of $\Omega$
consisting only of those end-level cubic cells that contain $\Gamma_h$:
\begin{equation} \label{defomeg}
 \omega_h= \bigcup_{T \in \mathcal{F}_h} S_T.
\end{equation}
For the outer finite element space, instead of piecewise trilinear continuous functions in $\Omega$, consider all such
function only in $\omega_h$:
\[
 V_h^{\omega}:=\{v_h\in C(\omega_h)\ |\ v|_{S}\in Q_1~~ \forall\ S \in\mathcal{T}_h\}.
\]
Further, define the space of traces of functions from $V_h^{\omega}$ on $\Gamma_h$:
\[
 V_h^{\omega,\Gamma}:=\{\psi_h\in H^1(\Gamma_h)\ |\ \exists ~ v_h\in V_h^{\omega}\  \text{such that }\ \psi_h=v_h|_{\Gamma_h}\}.
\]
It is clear that $V_h^{\Gamma}\subset V_h^{\omega,\Gamma}$. When grid is locally refined, the  dimension of the space $V_h^{\omega,\Gamma}$ can be larger for the following reason: The inter-element  continuity of bulk finite element functions
imposes algebraic constraints in hanging nodes. For the space $V_h^{\omega,\Gamma}$ such constraints should be imposed only for hanging nodes shared by two cubic cells from $\omega_h$, but not for hanging nodes lying on the boundary of $\omega_h$,
which are now available for extra degrees of freedom.

We performed numerical experiments with the trace finite element space from \eqref{e:fem-space} and observed optimal
convergence behavior with respect to the number of degrees of freedom (cf. Section~\ref{s_numer}).
We expect that the method with $V_h^{\omega,\Gamma}$ instead of $V_h^{\Gamma}$ behaves similarly.

Furthermore, one may relax the continuity assumption for the bulk finite element functions in $\omega_h$ and consider a
 discontinuous Galerkin finite element method. This is an interesting (and natural in some sense) alternative for an octree based finite element method, which we plan to study elsewhere.

As usual with transport-diffusion problems, the advection terms can written in several equivalent ways, leading, however, to different discretizations. For example, instead of $- (\bw_h\cdot\nath v_h) u_h$  one way choose the
`skew-symmetric' variant
\[
\frac12\int_{\Gamma_h}(\bw_h\cdot\nath u_h) v_h\, - (\bw_h\cdot\nath v_h) u_h+ (\Div_{\Gamma_h}\bw_h) \, u_hv_h\, \ds_h.
\]
We shall analyse the `conservative' formulation \eqref{FEM}, since it avoids the surface divergence of $\bw_h$.
Our motivation is that for a polygonal  surface reconstruction the term $\Div_{\Gamma_h}\bw_h$ leads to the consistency error of the method of
order $O(h)$, if $\bw_h$ is a smooth extension of $\bw$ from $\Gamma$. For the `conservative' form we a able to prove $O(h^2)$ accuracy of the method.

 Below we discuss a few more developments of the trace finite element
method known from the literature.

\subsubsection{Full gradient method} The ``full gradient'' variant of the trace finite element method was suggested
in \cite{Alg2} and studied in \cite{Alg2,Reusken2014}. The modification is aimed on improving algebraic properties
of the stiffness matrix of the method. The rationality behind the full gradient method is clear from the following observation.
For solution $u$ of \eqref{problem}, denote by $u^e$ its normal extension to an arbitrary small neighbourhood $\mathcal{O}(\Gamma)$ of $\Gamma$, i.e., $u^e$ is constant along normal directions to $\Gamma$. Note that $\nabla_{\Gamma} u= \nabla u^e$ and so $u^e$ satisfies the identity \eqref{problem} with surface gradients (in the diffusion term) replaced
by full gradients:
\[
\int_{\Gamma}\eps\nabla u^e\cdot\nabla v\, - (\bw\cdot\nabla_{\Gamma} v) u^e
+c \, u^ev\, \ds =\int_{\Gamma}f v\, \ds
\]
for all $v$ sufficiently smooth in $\mathcal{O}(\Gamma)$ ($v$ is not necessarily constant along normals).
This identity shows that the following finite element formulation is consistent: Find $u_h\in V_h$ satisfying
\begin{equation}
\int_{\Gamma_h}\eps\nabla u_h\cdot\nabla v_h\, - (\bw_h\cdot\nath v_h) u_h
+c_h \, u_hv_h\, \ds_h =\int_{\Gamma_h}f_h v_h\, \ds_h \label{FEM_fg}
\end{equation}
for all $v_h\in V_h$.

The full-gradient method \eqref{FEM_fg} uses the bulk finite element space $V_h$ instead of the surface finite element space $V_h^\Gamma$ in \eqref{FEM}. However, practical implementation of both methods uses the frame of all bulk finite element nodal basis functions $\phi_i\in V_h$ such that  $\mbox{supp}(\phi_i)\cap\Gamma_h\neq\emptyset$. Hence the active degrees of freedom  in both methods are the same. The stiffness matrices are, however, different.
For the case of the Laplace-Beltrami problem and a  regular quasi-uniform tetrahedral grid, results in \cite{Alg2,Reusken2014}  show that the conditioning of the (diagonally scaled) stiffness matrix of the full gradient method substantially improves over the conditioning of the matrix for \eqref{FEM}, for the expense of a slight deterioration
of the accuracy of the method.

\subsubsection{SUPG stabilized method}

Similar to the plain Galerkin finite element for  advection-diffusion equations the method \eqref{FEM} is prone to instability unless mesh is sufficiently fine such that the mesh Peclet number is less than one.

In \cite{ORXimanum}, a SUPG type stabilized trace finite element method was introduced  and analysed for $P_1$ continuous  bulk finite elements.  The stabilized formulation reads:
 Find $u_h\in V_h^{\Gamma}$ such that
\begin{multline}
 \int_{\Gamma_h}\eps\nath u_h\cdot\nath v_h\, - (\bw_h\cdot\nath v_h) u_h+c_h\, u_hv_h\, \ds_h \\
  +\sum_{T\in\mathcal{F}_h}\delta_T\int_{T}(-\eps\Delta_{\Gamma_h}u + \bw_h\cdot\nath u + (c_h+\Div_{\Gamma_h}\bw_h) \, u)\,\bw_h\cdot\nath v\, \ds_h\\ =\int_{\Gamma_h}f_h v\, \ds_h + \sum_{T\in\mathcal{F}_h}\delta_T\int_{T}f_h(\bw_h\cdot\nath v)\, \ds_h\quad \forall~ v_h\in V_h^{\Gamma}. \label{FEM_SUPG}
\end{multline}
The stabilization parameter  $\delta_T$ depends on $T \subset S_T$. The side length of the cubic cell $S_T$ is denoted by $h_{S_T}$.  Let $\displaystyle \mathsf{Pe}_T:=\frac{h_{S_T} \|\mathbf{w}_h\|_{L^\infty(T)}}{2\eps}$
be the cell Peclet number.
We take
\begin{equation}
 \widetilde{\delta_T}=
\left\{
\begin{aligned}
&\frac{\delta_0 h_{S_T}}{\|\mathbf{w}_h\|_{L^\infty(T)}} &&\quad \hbox{ if } \mathsf{Pe}_T> 1,\\
&\frac{\delta_1 h^2_{S_T}}{\eps}  &&\quad \hbox{ if } \mathsf{Pe}_T\leq 1,
\end{aligned}
\right.\quad\text{and} \quad\delta_T=\min\{\widetilde{\delta_T},c^{-1}\}, \label{e:2.10}
\end{equation}
with some given positive constants  $\delta_0,\delta_1\geq 0$.

\subsubsection{Gradient-jump stabilized method}

Another way of improving algebraic properties of the trace finite element method was suggested in \cite{Alg1}.
In that paper, the authors introduced the term
\[
J(u,v)=\sum_{F\in\omega_h^F}\sigma_F \int_{\partial F}\llbracket\bn_F\cdot\nabla u\rrbracket\llbracket\bn_F\cdot\nabla v\rrbracket,
\]
with
$
\sigma_E =0(1).
$ Here $\omega_h^F$ denotes the set of all internal interfaces between cubic cells in $\omega_h$, $\bn_F$ is the normal vector
for interface $F$, and $\llbracket\bn_F\cdot\nabla u\rrbracket$ denotes the jump of the normal derivative of $u$ across $F$.
Now, the edge-stabilized trace finite element reads: Find $u_h\in V_h$ such that
\begin{equation}
\eps\int_{\Gamma_h}\nath u_h\cdot\nath v_h\, - (\bw_h\cdot\nath v_h) u_h+c_h\, u_hv_h\, \ds_h + J(u_h,v_h) =\int_{\Gamma_h}f_h v_h\, \ds_h \label{FEM_edge}
\end{equation}
for all $v_h\in V_h$.

For $P_1$ continuous bulk finite element methods on quasi-uniform regular tetrahedal meshes and the Laplace-Beltrami equation, the paper \cite{Alg1} shows the optimal orders of convergence of the trace finite element method \eqref{FEM_edge}
and improved algebraic properties of the stiffness matrix.
\smallskip

In this paper, we consider the trace finite element in \eqref{FEM}, its full-gradient variant in \eqref{FEM_fg},
and the SUPG stabilized version \eqref{FEM_SUPG} for the case of convection-dominated equations. We are not studying the edge stabilized version here.

\section{Well-posedness and error analysis of the trace FEM}\label{s_error}
In this section, we state a well-posedness result for \eqref{FEM}. Further, an error analysis of the trace method is presented for a regular problem (we assume $\ep$ is not too small). In particular, we shall prove optimal order of convergence of the method in $H^1$ and $L^2$ surface norms.
 The analysis can be extended to the full-gradient and the  SUPG stabilized  formulations following the arguments of \cite{Reusken2014} and \cite{ORXimanum}, respectively. However, in this paper we restrict ourselves to the
original formulation \eqref{FEM}.

Let $\{\mathcal{T}_h\}_{h>0}$ be a family of octree meshes covering
the domain $\Omega$. Parameter $h$ denotes the maximum size of a cubic cell from $\mathcal{T}_h$.
For the sake of analysis, we assume that the maximum number of local refinement levels is bounded from above independently of $h$.
Further, we need some more notations and definitions.

\subsection{Preliminaries}

For a given octree mesh $\mathcal{T}_h$ let $\Psi_h$ be the set of  all nodal basis functions of the bulk  finite element space $V_h$. Here and further denote by $\Omega_h$ the union of all supports of nodal basis functions intersected by $\Gamma_h$:
\[
\Omega_h=\bigcup_{\psi\in\Psi_h}\{\mbox{supp}(\psi)\,|\,\mbox{supp}(\psi)\cap\Gamma_h\neq\emptyset\}.
\]
For the surface $\Gamma$, we define its $h$-neighborhood:
\begin{equation}
 U_h:=\{\mathbf{x}\in \mathbb{R}^3\ |\ \mathrm{dist}(\mathbf{x},\Gamma)< \tilde{c}\, h\},\label{e:3.1}
\end{equation}
and assume  that $\tilde{c}$ is sufficiently large and $h$ is sufficiently  small such that $\Omega_h\subset U_h\subset\Omega$ and
\begin{equation}
 5\tilde{c}\,h<\left(\mathrm{max}_{i=1,2} \|\kappa_i\|_{L^{\infty}(\Gamma)}\right)^{-1}\label{e:3.2}
\end{equation}
holds, with $\kappa_i$ being the principal curvatures of $\Gamma$.

Let $d : U_h\rightarrow \mathbb{R}$ be the signed distance function.
Thus $\Gamma$ is the zero level set of $d$ and $\Gamma\in C^2~\Rightarrow~d\in C^2(U_h)$. We assume $d<0$ in the interior of $\Gamma$ and $d>0$ in the exterior and define $\mathbf{n}(\mathbf{x}):=\nabla d(\mathbf{x})$ for all $\mathbf{x}\in U_h$. Hence,  $\mathbf{n}$ is the normal vector on $\Gamma$ and $|\mathbf{n}(\bx)|=1$ for all $\bx\in U_h$.  The Hessian of $d$ is denoted by
\begin{equation*}
 \mathbf{H}(\bx):= \nabla^2 d(\bx)\in \mathbb{R}^{3\times 3},\quad \bx\in U_h.
\end{equation*}
The eigenvalues of $\mathbf{H}(\bx)$ are denoted by $\kappa_1(\bx)$, $\kappa_2(\bx)$, and~$0$.
 For $\bx\in\Gamma$ the eigenvalues $\kappa_i, i=1,2,$ are the principal curvatures.
For each $\bx\in U_h$,  define the projection $\bp: U_h\rightarrow\Gamma$ by
\begin{equation*}
 \bp(\bx)=\bx-d(\bx)\bn(\bx).
\end{equation*}
Due to the assumption \eqref{e:3.2},  the decomposition $\bx=\bp(\bx)+d(\bx)\bn(\bx)$ is unique.
We need the orthogonal projector
\begin{equation*}
 \bP(\bx):=\mathbf{I}-\bn(\bx)\bn(\bx)^T, \quad \hbox{for }\bx\in U_h.
\end{equation*}
The tangential derivative can be written as $\nabla_{\Gamma} g(\bx)=\bP\nabla g(\bx)$ for $\bx\in \Gamma$.
One can verify that for this projection and for the Hessian $\mathbf H$ the relation
$\mathbf H\bP=\bP\mathbf{H}=\mathbf{H}$ holds.
Similarly, define
\begin{equation*}
 \bP_h(\bx):=\mathbf{I}-\bn_{h}(\bx)\bn_{h}(\bx)^T, \quad \hbox{for }\bx\in \Gamma_h,~\bx \hbox{ is not on an edge},
\end{equation*}
where $\bn_{h}$ is the unit (outward pointing) normal at $\bx\in \Gamma_h$. The tangential
derivative along $\Gamma_h$ is given by $\nabla_{\Gamma_h} g(\bx)=\bP_h(\bx)\nabla g(\bx)$.

Assume the following estimates on how well $\Gamma_h$ approximates $\Gamma$:
\begin{align}
&\mathrm{ess\ sup}_{\bx\in \Gamma_h}|d(\bx)| \le c_1 h^2, \label{e:3.9}\\
&\mathrm{ess\ sup}_{\bx\in \Gamma_h}|\bn(\bx)-\bn_{h}(\bx)| \le c_2   h,\label{e:3.10}
\end{align}
with constants $c_1$, $c_2$ are independent of $h$.
The assumption is reasonable if $\Gamma$ is defined as the zero level of a (locally) smooth level set function $\phi$
and $\Gamma_h$ is reconstructed as described in the previous  section from the zero level set of $\phi_h\in V_h$,
where $\phi_h$ interpolates $\phi$ and it holds
\[
\|\phi-\phi_h\|_{L^\infty(U_h)}+h\|\nabla(\phi-\phi_h)\|_{L^\infty(U_h)}\le c\,h^2.
\]

In the remainder, $A\lesssim B $ means $A\leq c\, B $ for some positive constant $c$ independent of $h$.

For $\bx\in\Gamma_h$, define
 $\mu_h(\bx) = (1-d(\bx)\kappa_1(\bx))(1-d(\bx)\kappa_2(\bx))\bn^T(\bx)\bn_h(\bx)$.
The surface measures  $\ds$ and $\ds_{h}$ on $\Gamma$ and $\Gamma_h$, respectively, are related by
\begin{equation}
 \mu_h(\bx)\ds_h(\bx)=\ds(\bp(\bx)),\quad \bx\in\Gamma_h. \label{e:3.16}
\end{equation}
The assumptions \eqref{e:3.9} and \eqref{e:3.10} imply that
\begin{equation}
 \mathrm{ess\ sup}_{\bx\in\Gamma_h}(1-\mu_h)\lesssim h^2,\label{e:3.17}
\end{equation}
cf. (3.37) in \cite{ORG09}.
The solution of  \eqref{problem}  and its data are defined on $\Gamma$,
while the finite element method is defined on $\Gamma_h$.
Hence, we need a suitable extension of a function from $\Gamma$ to its neighborhood.
For a function $v$ on $\Gamma$ we define
\begin{equation*}
 v^e(\bx):= v(\bp(\bx)) \quad \hbox{for all } \bx\in U_h.
\end{equation*}
The following formula for this lifting function are known (cf. section 2.3 in \cite{DD07}):
\begin{align}
 \nabla u^e(\mathbf{x}) &= (\mathbf{I}-d(\mathbf{x})\mathbf{H})\nabla_{\Gamma} u(\bp(\bx)) \quad \hbox{ in } U_h,\label{grad1}\\
 \nabla_{\Gamma_h} u^e(\bx) &= \bP_h(\bx)(\mathbf{I}-d(\mathbf{x})\mathbf{H})\nabla_{\Gamma} u(\bp(\bx)) \quad \hbox{ a.e. on } \Gamma_h,\label{grad2}
\end{align}
with $\mathbf{H}=\mathbf{H}(\mathbf{x})$.
By direct computation one derives the relation
\begin{multline}
\nabla^2 u^e(\bx)=(\bP -d(\bx) \mathbf{H})\nabla^2 u(\bp(\bx))(\bP -d(\bx) \mathbf{H})-(n^T\nabla u(\bp(\bx))\mathbf{H}\\
-(\mathbf H\nabla u(\bp(\bx))) \bn^T-\bn (\mathbf H\nabla u(\bp(\bx)))^T -d\nabla \mathbf H:\nabla u(\bp(\bx)).
 \label{e:3.14n}
\end{multline}
For  sufficiently smooth $u$ and $|\mu| \leq 2$, using this relation  one obtains the estimate
 \begin{equation}
  |D^{\mu} u^e(\bx)|\lesssim \left(\sum_{|\mu|=2}|D_{\Gamma}^{\mu} u(\bp(\bx))| + |\nabla_{\Gamma} u(\bp(\bx))|\right)\quad \hbox{in } U_h,
\label{e:3.13}
 \end{equation}
(cf. Lemma 3 in \cite{Dziuk88}). This further leads to (cf. Lemma 3.2 in \cite{ORG09}):
 \begin{equation}
 \|D^{\mu} u^e\|_{L^2(U_h)}\lesssim\sqrt{h}\|u\|_{H^2(\Gamma)}, \quad |\mu|\leq 2. \label{e:3.14}
 \end{equation}
 \smallskip

 For the analysis, we shall assume that $\mathbf{w}_h$, $c_h$ and $f_h$ are defined as the extensions  of $\bw$, $c$, and $f$, respectively, along normals to $\Gamma$, i.e. $\mathbf{w}_h=\mathbf{w}^e$, $c_h=c^e$ and $f_h=f^e$ on $\Gamma_h$.

\subsection{Well-posedness}
We need some further notations. For each $T\in\mathcal{F}_h$, denote by ${\mathbf{m}_h}|_{E}$ the outer normal to an edge $E$ in the plane which contains element $T$. For a curved geometries, ``tangential'' normal vectors to $E$ from two  different sides are not necessarily collinear, cf. Figure \ref{fig:3.1}.
\begin{figure}[ht!]
\begin{center}
  \includegraphics[trim=100 100 100 50,clip,width=0.45\textwidth]{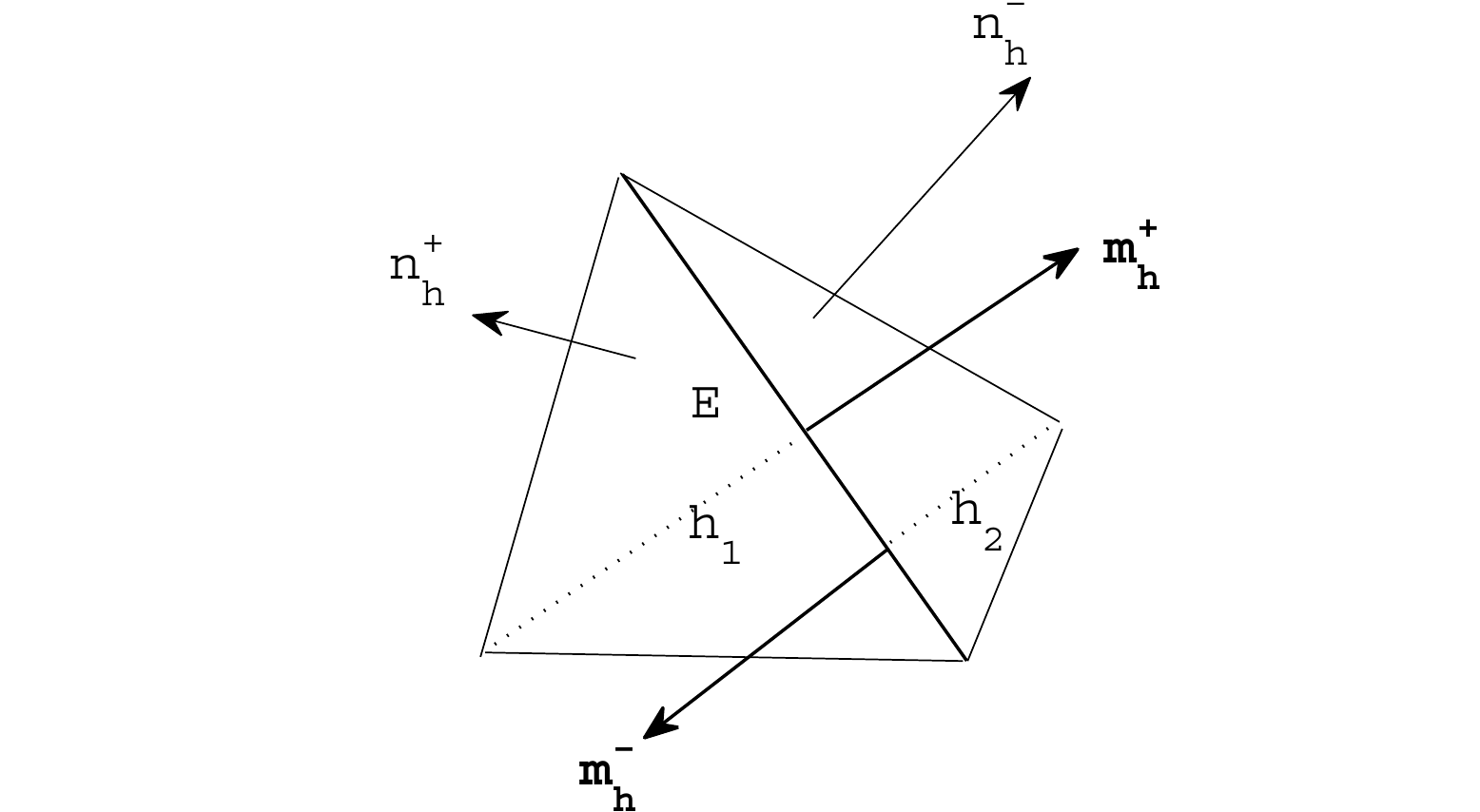}
    \caption{Two neighboring surface triangles may belong to two different planes and so two tangential
    normals $\mathbf{m}_h^+$ and $\mathbf{m}_h^-$ to the edge $E$ are not necessarily collinear.     \label{fig:3.1}}
\end{center}
 \end{figure}
 Let $\llbracket\mathbf{m}_h\rrbracket|_{E}=\mathbf{m}_h^+  +\mathbf{m}_h^-$ be the jump of two outward normals to the edge $E$. For a `usual' planar case, this jump is zero. Over $\Gamma_h$, these jumps produce additional consistency
 term in the integration by parts  formula:
\begin{multline}\label{inth}
-\int_{\Gamma_h}(\bw_h\cdot\nabla_{\Gamma_h} v_h) u_h\,\ds_h = \frac12\int_{\Gamma_h}(\bw_h\cdot\nabla_{\Gamma_h} u_h) v_h-(\bw_h\cdot\nabla_{\Gamma} v_h) u_h+\Div_\Gamma\bw_hu_hv_h\, \ds_h\\ - \frac14\sum_{T\in\F_h}\int_{\partial T}( \bw_h\cdot\llbracket\mathbf{m}_h\rrbracket)u_hv_h\d\br.
\end{multline}
We introduce the notation for the finite element bilinear form and the functional:
\[
a_h(u,v):=\eps\int_{\Gamma_h}\nath u\cdot\nath v\, - (\bw_h\cdot\nath v) u
+c_h \, uv\, \ds_h,\quad f_h(v):=\int_{\Gamma_h}f_h v\,\ds_h.
\]

The following result is now straightforward.

\begin{proposition}
If one assumes
\begin{equation}\label{A1h}
\int_{\Gamma_h}(c_h+\frac12\Div_{\Gamma_h}\bw_h)v_h^2\,\ds_h- \frac14\sum_{T\in\F_h}\int_{\partial T}( \bw_h\cdot\llbracket\mathbf{m}_h\rrbracket)v_h^2\d\br\ge c_0\|v_h\|^2_{L^2(\Gamma_h)},~~\text{for all}~v_h\in V_h^\Gamma,
 \end{equation}
then the bilinear form is positive-definite on $V_h^\Gamma$ and so the trace finite element method  \eqref{FEM} is well-posed. The solution $u_h$ satisfies the a priori estimate,
\begin{equation}\label{apr_estFE}
\frac{c_0}2\|u_h\|_{L^2(\Gamma_h)}^2+\eps\|\nabla_{\Gamma_h} u\|_{L^2(\Gamma_h)}^2\le c_0^{-1}\|f\|_{L^2(\Gamma_h)}^2.
\end{equation}
\end{proposition}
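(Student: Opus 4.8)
The plan is to apply the Lax–Milgram lemma to the bilinear form $a_h(\cdot,\cdot)$ on $V_h^\Gamma$, mirroring the continuous argument that yielded Proposition~2.1. The only nontrivial ingredient is coercivity, since boundedness of $a_h$ on $V_h^\Gamma$ (with an $h$-dependent constant, which is all we need for fixed $h$) is immediate from the Cauchy–Schwarz inequality and the $L^\infty$ bounds on $\bw_h$ and $c_h$, and continuity of the functional $f_h(v)$ follows likewise. So the heart of the argument is to show $a_h(v_h,v_h)\ge \eps\|\nabla_{\Gamma_h}v_h\|_{L^2(\Gamma_h)}^2 + c_0\|v_h\|_{L^2(\Gamma_h)}^2$ for all $v_h\in V_h^\Gamma$.

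First I would expand $a_h(v_h,v_h) = \eps\|\nabla_{\Gamma_h}v_h\|_{L^2(\Gamma_h)}^2 - \int_{\Gamma_h}(\bw_h\cdot\nabla_{\Gamma_h}v_h)v_h\,\ds_h + \int_{\Gamma_h}c_h v_h^2\,\ds_h$. The diffusion term is already in the desired form, so everything reduces to controlling the advection term together with the reaction term. The key step is to rewrite the advection term using the integration-by-parts identity \eqref{inth} applied with $u_h=v_h$: taking $u_h=v_h$ there, the left-hand side $-\int_{\Gamma_h}(\bw_h\cdot\nabla_{\Gamma_h}v_h)v_h\,\ds_h$ equals $\frac12\int_{\Gamma_h}(\bw_h\cdot\nabla_{\Gamma_h}v_h)v_h - (\bw_h\cdot\nabla_\Gamma v_h)v_h + \Div_\Gamma\bw_h\,v_h^2\,\ds_h - \frac14\sum_{T}\int_{\partial T}(\bw_h\cdot\llbracket\mathbf{m}_h\rrbracket)v_h^2\,\d\br$.

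The slightly delicate point — and the main thing to get right — is the handling of the mismatch between $\nabla_{\Gamma_h}v_h$ and $\nabla_\Gamma v_h$ inside the symmetrized expression. One observes that $\bw_h\cdot\nabla_{\Gamma_h}v_h = \bw_h\cdot\bP_h\nabla v_h$ and $\bw_h\cdot\nabla_\Gamma v_h = \bw_h\cdot\bP\nabla v_h$; but since $\bw_h=\bw^e$ is tangential to $\Gamma$ (i.e.\ $\bw^e\cdot\bn=0$), and since for functions $v_h$ whose relevant gradient lives on $\Gamma_h$ one can interchange these up to terms that cancel in the symmetrized combination, the first two terms $\frac12(\bw_h\cdot\nabla_{\Gamma_h}v_h)v_h - \frac12(\bw_h\cdot\nabla_\Gamma v_h)v_h$ in fact telescope away — this is precisely why identity \eqref{inth} is stated in that asymmetric-looking form. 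What remains is $-\int_{\Gamma_h}(\bw_h\cdot\nabla_{\Gamma_h}v_h)v_h\,\ds_h = \frac12\int_{\Gamma_h}\Div_\Gamma\bw_h\,v_h^2\,\ds_h - \frac14\sum_T\int_{\partial T}(\bw_h\cdot\llbracket\mathbf{m}_h\rrbracket)v_h^2\,\d\br$. Substituting this back, $a_h(v_h,v_h) = \eps\|\nabla_{\Gamma_h}v_h\|_{L^2(\Gamma_h)}^2 + \int_{\Gamma_h}(c_h + \tfrac12\Div_\Gamma\bw_h)v_h^2\,\ds_h - \frac14\sum_T\int_{\partial T}(\bw_h\cdot\llbracket\mathbf{m}_h\rrbracket)v_h^2\,\d\br$, and assumption \eqref{A1h} is exactly the statement that the last two terms dominate $c_0\|v_h\|_{L^2(\Gamma_h)}^2$. (In the pure Laplace–Beltrami case $\bw=0$, $c=0$ one instead invokes a discrete Poincaré inequality on $\Gamma_h$ analogous to \eqref{FdrA}, which the hypothesis on $\Gamma_h$ approximating $\Gamma$ justifies.)

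Coercivity in hand, Lax–Milgram gives existence and uniqueness of $u_h\in V_h^\Gamma$. For the a priori estimate \eqref{apr_estFE}, I would test \eqref{FEM} with $v_h=u_h$, use the coercivity identity just derived to get $\eps\|\nabla_{\Gamma_h}u_h\|_{L^2(\Gamma_h)}^2 + c_0\|u_h\|_{L^2(\Gamma_h)}^2 \le \int_{\Gamma_h}f_h u_h\,\ds_h$, bound the right side by $\|f_h\|_{L^2(\Gamma_h)}\|u_h\|_{L^2(\Gamma_h)}\le \tfrac{1}{2c_0}\|f_h\|_{L^2(\Gamma_h)}^2 + \tfrac{c_0}{2}\|u_h\|_{L^2(\Gamma_h)}^2$ via Young's inequality, and absorb the $\tfrac{c_0}{2}\|u_h\|^2$ term into the left side. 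Since $f_h=f^e$, $\|f_h\|_{L^2(\Gamma_h)}$ is comparable to $\|f\|_{L^2(\Gamma)}$ up to the $(1+O(h^2))$ factor from \eqref{e:3.17}, which accounts for the clean constant in \eqref{apr_estFE}. I expect no real obstacle beyond carefully invoking \eqref{inth} with the tangentiality of $\bw_h$; this is a "straightforward" proposition precisely because identity \eqref{inth} has been set up to absorb the only subtlety.
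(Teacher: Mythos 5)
Your proof is correct and follows exactly the route the paper intends (the paper offers no written proof beyond calling the result ``straightforward'' after establishing \eqref{inth}): set $u_h=v_h$ in \eqref{inth} to obtain the coercivity identity $a_h(v_h,v_h)=\eps\|\nabla_{\Gamma_h}v_h\|_{L^2(\Gamma_h)}^2+\int_{\Gamma_h}(c_h+\frac12\Div_{\Gamma_h}\bw_h)v_h^2\,\ds_h-\frac14\sum_{T}\int_{\partial T}(\bw_h\cdot\llbracket\mathbf{m}_h\rrbracket)v_h^2\,\d\br$, invoke \eqref{A1h}, and finish with Young's inequality. The only wrinkle is your somewhat laboured ``telescoping'' discussion of the mismatch between $\nabla_{\Gamma_h}$ and $\nabla_{\Gamma}$ in \eqref{inth}: those $\Gamma$ subscripts are evidently typos for $\Gamma_h$ (as the appearance of $\Div_{\Gamma_h}\bw_h$ in \eqref{A1h} confirms), so the two advection terms cancel identically and no tangentiality argument is needed; likewise the conversion of $\|f_h\|_{L^2(\Gamma_h)}$ to $\|f\|_{L^2(\Gamma)}$ via $\mu_h$ is unnecessary since \eqref{apr_estFE} is stated with the $L^2(\Gamma_h)$ norm.
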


If $c_h=0$ and $\bw_h=0$ on $\Gamma_h$ (the Laplace-Beltrami problem), then one imposes the zero mean conditions for the
solution and the right-hand side $\int_{\Gamma_h} f_h=\int_{\Gamma_h} u_h=0$ and proves the well-posedness result with the help of the Poincare inequality on $\Gamma_h$. For the mesh  independent stability bound, the Poincare constant should be uniformly bounded independent of $h$.
\medskip

To assess how restrictive the assumption in \eqref{A1h}, we invoke the following estimates (Lemmas~3.5 and~3.6 in \cite{ORXimanum}) to bound the edge terms:
\begin{equation*} \label{jump_est}
 |\bP(\bx)\llbracket\mathbf{m}_h\rrbracket(\bx)|\lesssim h^2
\qquad \text{for}~~\bx\in E, 
\end{equation*}
and
\begin{equation*} \label{edge_est}
 \sum_{T\in\F_h}\int_{\partial T}v_h^2 \, \ds_h  \lesssim  h^{-1} \|v_h\|^2_{L^2(\Gamma_h)}+
  h \|\nabla_{\Gamma_h} v_h\|^2_{L^2(\Gamma_h)} \quad \text{for all}~~v_h \in V_h^\Gamma.
\end{equation*}
We remark that in~\cite{ORXimanum} these estimates were proved for tetrahedra bulk subdivision, but all arguments
carry over to cubic bulk meshes once we assume that the number of refinement level in octree is bounded.
Since $\bP\bw^e=\bw^e$, we get with $\bw_h=\bw^e$ on $\Gamma_h$:
\[
\left|\sum_{T\in\F_h}\int_{\partial T}( \bw_h\cdot\llbracket\mathbf{m}_h\rrbracket)v_h^2\d\br\right|
\lesssim  h\|v_h\|_{L^2(\Gamma_h)}^2+h^3\|\nabla_{\Gamma_h} v_h\|_{L^2(\Gamma_h)}^2.
\]
Therefore, we conclude that for sufficiently fine mesh, the bilinear form $a_h$ is positive definite on $V_h^\Gamma$ and so the discrete problem is well-posed under condition similar to
the differential case: $c_h+\frac12\Div_{\Gamma_h}\bw_h\ge c_0>0$ on $\Gamma_h$.

\subsection{Consistency estimate} \label{sectconsistency}
The consistency error of the finite element method \eqref{FEM} is due to geometric errors resulting
from the approximation of $\Gamma$ by $\Gamma_h$. To estimate this geometric errors, we need  a few additional
definitions and results, which can be found in, for example, \cite{DD07}. For $\bx\in \Gamma_h$ define $\tilde{\bP}_{h}(\bx)= \mathbf{I}-\bn_h(\bx)\bn(\bx)^T/(\bn_h(\bx)\cdot\bn(\bx))$.
 One can represent the surface gradient of $u\in H^1(\Gamma)$ in terms of $\nath u^e$ as follows
\begin{equation*} \label{hhl}
 \nat u(\bp(\bx))=(\mathbf{I}-d(\bx)\mathbf{H}(\bx))^{-1} \tilde{\bP}_{h}(\bx) \nath u^e(\bx)~~ \hbox{ a.e. }\bx\in \Gamma_h.
\end{equation*}
Due to \eqref{e:3.16}, we get
\begin{equation*}\label{aux13}
 \int_{\Gamma}\nat u\nat v\, \ds=\int_{\Gamma_h} \mathbf{A}_h\nath u^e\nath v^e \, \ds_h \quad \hbox{for all } v\in H^1(\Gamma),
\end{equation*}
with $\mathbf{A}_h(\bx)=\mu_h(\bx) \tilde{\bP}^T_h(\bx)(\mathbf{I}-d(\bx)\mathbf{H}(\bx))^{-2}\tilde{\bP}_h(\bx)$.
 From $\bw \cdot \bn =0$ on $\Gamma$ and $\bw^e(\bx)=\bw(\bp(\bx))$, $\bn(\bx) =\bn(\bp(\bx))$ it follows that $\bn(\bx) \cdot \bw^e(\bx)=0$ and thus $\bw(\bp(\bx))= \tilde \bP_h(\bx)\mathbf{w}^e(\bx)$ holds. Using this, we get the relation
\begin{equation*}\label{aux14}
 \int_{\Gamma}(\bw\cdot \nat u) v \, \ds =  \int_{\Gamma_h} (\mathbf{B}_h \mathbf{w}^e \cdot \nath u^e)v^e\, \ds_h,
\end{equation*}
with $\mathbf{B}_h=\mu_h(\bx)\tilde{\bP}_h^T(I-d\mathbf{H})^{-1}\tilde\bP_h$.
We shall use the lifting procedure $\Gamma_h \to \Gamma$  given by
\begin{equation*} \label{deflift}
 v_h^l(\bp(\bx)): =v_h(\bx) \quad \text{for}~~ \bx\in\Gamma_h.
\end{equation*}
It is easy to see that $v_h^l\in H^1(\Gamma)$.


The following lemma estimates the consistency error of the finite element method \eqref{FEM}.

\begin{lemma} \label{thm3}
 Let $u\in H^2(\Gamma)$ be the solution of \eqref{problem}, then we have
\begin{equation*}
 \sup_{v_h\in V_h^{\Gamma}}\frac{|f_h(v_h)-a_h(u^e,v_h)|}{\|v_h\|_{H^1(\Gamma_h)}} \lesssim h^2(\|u\|_{H^1(\Gamma)}+\|f\|_{L^2(\Gamma)}).
\end{equation*}
\end{lemma}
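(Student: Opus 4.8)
The plan is to start from the weak formulation \eqref{weak} on $\Gamma$, which says $a(u,v_h^l)=f(v_h^l)$ for every $v_h\in V_h^\Gamma$ (here $v_h^l$ is the lift of $v_h$ to $\Gamma$, which lies in $H^1(\Gamma)$). I would then rewrite all three pieces of $a(u,v_h^l)$ and of $f(v_h^l)$ as integrals over $\Gamma_h$ using the change-of-variables identities assembled in Section~\ref{sectconsistency}: namely $\int_\Gamma \nat u\,\nat v\,\ds=\int_{\Gamma_h}\mathbf A_h\nath u^e\,\nath v^e\,\ds_h$, $\int_\Gamma(\bw\cdot\nat u)v\,\ds=\int_{\Gamma_h}(\mathbf B_h\bw^e\cdot\nath u^e)v^e\,\ds_h$, and $\int_\Gamma c\,uv\,\ds=\int_{\Gamma_h}\mu_h c^e u^e v^e\,\ds_h$, and similarly $\int_\Gamma f v\,\ds=\int_{\Gamma_h}\mu_h f^e v_h\,\ds_h$. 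Subtracting $a_h(u^e,v_h)$ and $f_h(v_h)$ (which by our convention use $\bw_h=\bw^e$, $c_h=c^e$, $f_h=f^e$ and the plain projectors $\bP_h$, identity weight on $\ds_h$), the whole difference $f_h(v_h)-a_h(u^e,v_h)$ becomes $\int_{\Gamma_h}$ of terms carrying the error matrices $(\mathbf A_h-\eps^{-1}\cdot\eps\,\text{id})$, $(\mathbf B_h-\text{id})$, $(\mu_h-1)$, paired against $\nath u^e$, $u^e$, $v_h$, $\nath v_h$, plus — crucially — the edge jump term $\tfrac14\sum_{T}\int_{\partial T}(\bw_h\cdot\llbracket\mathbf m_h\rrbracket)u^e v_h\,\d\br$ coming from the integration-by-parts identity \eqref{inth} that was used to move the convection derivative onto $v_h^l$ in $a$ but is \emph{not} present in $a_h$ (or rather produces this leftover because $a_h$ has the convection derivative on $v_h$ directly).

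The key quantitative inputs are: the geometric estimates \eqref{e:3.9}, \eqref{e:3.10}, \eqref{e:3.17}, which give $\|\mathbf I-d\mathbf H\|_{L^\infty(\Gamma_h)}=1+O(h^2)$, $\|\tilde\bP_h-\bP_h\|_{L^\infty(\Gamma_h)}=O(h)$ but more importantly the \emph{tangential} combination appearing in $\mathbf A_h,\mathbf B_h$ is $O(h^2)$-close to the corresponding clean projector, and $\mathrm{ess\,sup}_{\Gamma_h}|1-\mu_h|\lesssim h^2$; together these yield $\|\mathbf A_h-\bP_h\|$, $\|\mathbf B_h-\bP_h\|$, $|\mu_h-1|$ all $\lesssim h^2$ in $L^\infty(\Gamma_h)$ (the standard fact from \cite{ORG09,DD07}). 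For the edge term I would use the estimate $|\bP(\bx)\llbracket\mathbf m_h\rrbracket(\bx)|\lesssim h^2$ on $E$ quoted just above the lemma, together with $\bP\bw^e=\bw^e$, so that $|\bw_h\cdot\llbracket\mathbf m_h\rrbracket|\lesssim h^2$, and the edge trace inequality $\sum_T\int_{\partial T}v_h^2\,\ds_h\lesssim h^{-1}\|v_h\|_{L^2(\Gamma_h)}^2+h\|\nath v_h\|_{L^2(\Gamma_h)}^2$; combined with the analogous bound for $u^e$ (using $\|u^e\|_{H^1}$ on the band, or directly $\|u^e\|_{L^\infty}$, $\|\nath u^e\|$ controlled by $\|u\|_{H^1(\Gamma)}$ via \eqref{grad2}) a Cauchy--Schwarz on each edge gives a contribution $\lesssim h^2\,\|u\|_{H^1(\Gamma)}\|v_h\|_{H^1(\Gamma_h)}$ — the $h^{-1}$ and $h$ from the two trace inequalities multiply out against $h^2\cdot h^2$ to leave $h^2$. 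Finally I would apply Cauchy--Schwarz on $\Gamma_h$ to the bulk terms, pull out the $O(h^2)$ $L^\infty$ bounds on the error matrices, and use that the norms over $\Gamma_h$ of $u^e$, $\nath u^e$ are equivalent (uniformly in $h$) to $\|u\|_{H^1(\Gamma)}$ by \eqref{grad2} and \eqref{e:3.16}–\eqref{e:3.17}, and $\|f^e\|_{L^2(\Gamma_h)}\lesssim\|f\|_{L^2(\Gamma)}$ likewise; dividing by $\|v_h\|_{H^1(\Gamma_h)}$ and taking the supremum gives the claimed bound $\lesssim h^2(\|u\|_{H^1(\Gamma)}+\|f\|_{L^2(\Gamma)})$.

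The main obstacle I expect is the bookkeeping of the edge/jump term: one must be careful that the integration-by-parts identity \eqref{inth} on $\Gamma_h$ is what connects the form $a_h$ (which has $-(\bw_h\cdot\nath v_h)u_h$) to the "symmetrized" version whose lift matches the continuous form $a$ after the change of variables, and that the leftover boundary sum is genuinely the $\tfrac14\sum_T\int_{\partial T}(\bw_h\cdot\llbracket\mathbf m_h\rrbracket)u^e v_h$ term — getting the signs and the factor right, and then realizing that this term is $O(h^2)$ precisely because of the $|\bP\llbracket\mathbf m_h\rrbracket|\lesssim h^2$ cancellation (a naive bound would only give $O(h)$, since $\llbracket\mathbf m_h\rrbracket$ itself is only $O(h)$ small — the point is that $\bw_h$ is tangential, so only the tangential part of the jump sees it). Everything else is the by-now-standard geometric-consistency computation from \cite{ORG09,Reusken2014}, and the fact that we use the "conservative" form \eqref{FEM} (no $\Div_{\Gamma_h}\bw_h$ term) is exactly what keeps the order at $h^2$ rather than $h$, as remarked in Section~\ref{s_FEM}.
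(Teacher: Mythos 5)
Your overall strategy coincides with the paper's: use the exact weak identity $a(u,v_h^l)=f(v_h^l)$ to write $f_h(v_h)-a_h(u^e,v_h)=\left[f_h(v_h)-f(v_h^l)\right]+\left[a(u,v_h^l)-a_h(u^e,v_h)\right]$, pull every term back to $\Gamma_h$ via the $\mathbf{A}_h$, $\mathbf{B}_h$, $\mu_h$ identities, and then use $|\mathbf{A}_h-\bP_h|+|\mathbf{B}_h-\bP_h|+|1-\mu_h|\lesssim h^2$ together with \eqref{grad2} and \eqref{e:3.16} to bound the four resulting bulk integrals $I_1,\dots,I_4$. That part is exactly the paper's proof.

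The step you single out as ``crucial'' and as ``the main obstacle'' --- the edge term $\tfrac14\sum_{T}\int_{\partial T}(\bw_h\cdot\llbracket\mathbf{m}_h\rrbracket)u^e v_h\,\d\br$ --- is not present in this decomposition, and asserting that it is makes your claimed identity for the residual incorrect. Both $a$ and $a_h$ are already written in the conservative form with the advective derivative on the \emph{test} function ($-(\bw\cdot\nat v)u$ and $-(\bw_h\cdot\nath v_h)u_h$), so no integration by parts on $\Gamma_h$ is ever performed: the conversion $\int_{\Gamma}(\bw\cdot\nat v_h^l)\,u\,\ds=\int_{\Gamma_h}(\mathbf{B}_h\bw^e\cdot\nath v_h)\,u^e\,\ds_h$ is a pointwise change of variables (via \eqref{e:3.16} and the formula expressing $\nat$ through $\nath$), valid a.e.\ on $\Gamma_h$, and produces no boundary sums. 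The identity \eqref{inth} with its $\llbracket\mathbf{m}_h\rrbracket$ jumps enters only the coercivity argument for $a_h$ and the a posteriori analysis, not the consistency estimate. Moreover, your arithmetic for this (spurious) term is off: with $|\bw_h\cdot\llbracket\mathbf{m}_h\rrbracket|\lesssim h^2$ and the edge trace inequality applied to both $u^e$ and $v_h$, one gets $h^2\cdot h^{-1}=h$, not $h^2$ --- there is no second factor of $h^2$ to ``multiply out'' against. So if that term really were in the residual, it would cap the consistency order at $O(h)$ with the tools you quote; the whole point of choosing the conservative form \eqref{FEM}, as the paper remarks, is that it avoids both $\Div_{\Gamma_h}\bw_h$ and any such edge contributions, which is precisely what preserves the $O(h^2)$ rate. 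Dropping the edge term from your decomposition (and the paragraph devoted to it) leaves a correct proof that matches the paper's.
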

\begin{proof}
The residual is decomposed as
\begin{equation} \label{er}
 f_h(v_h)-a_h(u^e,v_h)=f_h(v_h)-f(v_h^l)+a(u,v_h^l)-a_h(u^e,v_h).
\end{equation}
The following holds:
\begin{align*}
f(v^l_h)=&\,\int_{\Gamma}fv_h^l\, \ds= \int_{\Gamma_h}\mu_h f^e v_h\, \ds_h,\\
a(u,v^l_h)
=&\,\eps \int_{\Gamma}\nabla_{\Gamma} u \nabla_{\Gamma} v_h^l\, \ds
-\int_{\Gamma}(\bw\cdot\nabla_{\Gamma} v_h^l) u\,\ds +\int_{\Gamma}c\,uv_h^l\, \ds\\
=&\, \eps \int_{\Gamma_h} \mathbf{A}_h\nabla_{\Gamma_h} u^e\nabla_{\Gamma_h} v_h\, \ds_h
-\int_{\Gamma_h}(\mathbf{B}_h \bw^e\cdot\nath v_h ) u^e\, \ds_h+\int_{\Gamma_h}\mu_h c^eu^ev_h\, \ds_h.
\end{align*}
Substituting these relations into \eqref{er}   results in
\begin{multline}\label{aux12}
  f_h(v_h)-a_h(u^e,v_h)=
  \int_{\Gamma_h}(1-\mu_h)f^ev_h\, \ds_h+\eps\int_{\Gamma_h}(\mathbf{A}_h-\bP_h)\nabla_{\Gamma_h} u^e \cdot \nabla_{\Gamma_h}v_h\, \ds_h\\
-\int_{\Gamma_h}((\mathbf{B}_h-\bP_h)\mathbf{w}^e\cdot\nath v_h)  u^e\, \ds_h  + \int_{\Gamma_h}(\mu_h-1)c^eu^e v_h\, \ds_h =:I_1+I_2+I_3+I_4.
\end{multline}
We estimate the $I_i$ terms separately.
Applying  \eqref{e:3.17} we obtain
\begin{equation}
I_1+I_4\lesssim h^2 \|f^e\|_{L^2(\Gamma_h)}\|v_h\|_{L^2(\Gamma_h)}\label{aux15}.
\end{equation}
One can show, cf. (3.43) in \cite{ORG09}, the bound
\begin{equation*}
 |\bP_h-\mathbf{A}_h|=|\bP_h(\mathbf I-\mathbf A_h)|\lesssim h^2.
\end{equation*}
Using this and \eqref{grad2}  we obtain
\begin{equation}
I_2\lesssim h^2 \|\nath u^e\|_{L^2(\Gamma_h)}\|\nath v_h\|_{L^2(\Gamma_h)} \lesssim h^2 \|u^e\|_{H^1(\Gamma)}\|v_h\|_{H^1(\Gamma_h)}.
\end{equation}
Since $(\mathbf I-d\mathbf{H})^{-1}=\mathbf I+O(h^2)$, we also estimate
\begin{equation*}
 |\mathbf B_h-\bP_h|\lesssim h^2+|\mathbf{A}_h-\bP_h| \lesssim h^2.
\end{equation*}
This yields
\begin{equation}\label{aux16}
 I_3\lesssim  h^2 \|\nath v_h\|_{L^2(\Gamma_h)}\| u^e\|_{L^2(\Gamma_h)} \lesssim  {h^2} \|u\|_{H^1(\Gamma)}\|v_h\|_{H^1(\Gamma_h)}.
\end{equation}
Combining the results \eqref{aux12}-\eqref{aux16} proves the lemma.
\end{proof}

\subsection{Interpolation bounds}
For a smooth function $v$ defined on $\Gamma$ consider its extension on $\Gamma_h$, $v^e$. In this subsection, we show
that $v^e$ can be approximated using the finite element trace space $V_h^\Gamma$ with optimal order accuracy.
First we need the following simple lemma.

\begin{lemma} Consider an arbitrary plane  $\mathbb{P}\subset\mathbb{R}^3$ and any cubic cell $S\in\mathcal{T}_h$.
It holds
\begin{equation}\label{eq_trace}
\|v\|_{L^2(S\cap \mathbb{P})}^2\lesssim h^{-1}_S\|v\|_{L^2(S)}^2+h_S\|\nabla v\|_{L^2(S)}^2\quad \forall~v\in H^1(S).
\end{equation}
\end{lemma}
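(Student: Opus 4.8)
The plan is to prove the trace inequality \eqref{eq_trace} by a standard scaling argument, reducing to a reference configuration via the cubic structure of the mesh. First I would exploit the fact that every cell $S\in\mathcal{T}_h$ is a cube of side length $h_S$, so the affine map $\Phi_S:\hat S\to S$ from the unit reference cube $\hat S=[0,1]^3$ is a simple dilation (plus translation), $\Phi_S(\hat\bx)=h_S\hat\bx+\bx_S$. Under this map a plane $\mathbb{P}$ pulls back to a plane $\hat{\mathbb{P}}$, and for $v\in H^1(S)$ the pullback $\hat v=v\circ\Phi_S$ lies in $H^1(\hat S)$. The change of variables gives the scaling relations $\|v\|_{L^2(S)}^2=h_S^3\|\hat v\|_{L^2(\hat S)}^2$, $\|\nabla v\|_{L^2(S)}^2=h_S\|\hat\nabla\hat v\|_{L^2(\hat S)}^2$, and $\|v\|_{L^2(S\cap\mathbb{P})}^2=h_S^2\|\hat v\|_{L^2(\hat S\cap\hat{\mathbb{P}})}^2$ (the planar cross-section is a two-dimensional measure, hence the factor $h_S^2$).

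Next I would establish the reference inequality $\|\hat v\|_{L^2(\hat S\cap\hat{\mathbb{P}})}^2\le C\|\hat v\|_{H^1(\hat S)}^2$ with a constant $C$ independent of the particular plane $\hat{\mathbb{P}}$. This is the trace theorem for the bounded Lipschitz domain $\hat S$ restricted to a planar slice; the only subtlety is uniformity in $\hat{\mathbb{P}}$. I would handle this by a compactness argument: the set of planes meeting $\hat S$ is parametrized by a compact set (a unit normal on $S^2$ together with a bounded offset), the trace operator $H^1(\hat S)\to L^2(\hat S\cap\hat{\mathbb{P}})$ depends continuously on this parameter, and its norm is therefore bounded uniformly. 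Alternatively one can give an explicit bound: slice $\hat S$ by the family of planes parallel to $\hat{\mathbb{P}}$, apply the one-dimensional trace/Sobolev inequality on each line segment normal to the family, and integrate; this yields $\|\hat v\|_{L^2(\hat S\cap\hat{\mathbb{P}})}^2\lesssim \|\hat v\|_{L^2(\hat S)}\|\hat v\|_{H^1(\hat S)}\lesssim\|\hat v\|_{H^1(\hat S)}^2$ with an absolute constant.

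Finally I would combine the two ingredients: inserting the reference inequality and then the scaling relations,
\[
\|v\|_{L^2(S\cap\mathbb{P})}^2 = h_S^2\|\hat v\|_{L^2(\hat S\cap\hat{\mathbb{P}})}^2 \lesssim h_S^2\bigl(\|\hat v\|_{L^2(\hat S)}^2+\|\hat\nabla\hat v\|_{L^2(\hat S)}^2\bigr) = h_S^2\bigl(h_S^{-3}\|v\|_{L^2(S)}^2+h_S^{-1}\|\nabla v\|_{L^2(S)}^2\bigr),
\]
which is exactly $h_S^{-1}\|v\|_{L^2(S)}^2+h_S\|\nabla v\|_{L^2(S)}^2$. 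The implied constant is absolute since $\hat S$ is fixed and the reference bound is uniform over all planes.

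The main obstacle is the uniformity of the reference trace constant over all planes $\hat{\mathbb{P}}$ intersecting $\hat S$: a naive application of the trace theorem gives a constant that could degenerate when $\hat{\mathbb{P}}$ is nearly tangent to a face of $\hat S$ or meets it in a tiny sliver. The compactness/explicit-slicing argument above is what resolves this, and it is worth noting that the inequality is stated for \emph{all} of $H^1(S)$, not just finite element functions, so no inverse estimate is available and the $H^1$ term on the right is genuinely needed. Everything else is routine change of variables.
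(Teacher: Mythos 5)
Your overall architecture (dilation to the reference cube, the three scaling relations, and the final assembly) is correct, and the powers of $h_S$ all check out. The gap is in the one step that carries all the difficulty: the claim that the reference trace inequality $\|\hat v\|_{L^2(\hat S\cap\hat{\mathbb P})}^2\lesssim\|\hat v\|_{H^1(\hat S)}^2$ holds with a constant independent of the plane $\hat{\mathbb P}$. Neither of your two justifications closes it as stated. The compactness argument needs \emph{upper} semicontinuity of the plane-dependent trace operator norm, but what one gets cheaply (testing with fixed smooth $v$ and letting $\hat{\mathbb P}_k\to\hat{\mathbb P}$) is only \emph{lower} semicontinuity, which is the wrong direction: a lower semicontinuous function on a compact set need not be bounded above. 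Making the compactness argument work requires a locally uniform bound near each plane, which is essentially the global statement you are trying to prove. The explicit slicing argument fails for exactly the degenerate configurations you flag: if you foliate $\hat S$ by planes parallel to $\hat{\mathbb P}$ and run the one-dimensional trace inequality along segments \emph{normal to} $\hat{\mathbb P}$, then for points of $\hat S\cap\hat{\mathbb P}$ close to $\partial\hat S$ those segments can be arbitrarily short, the one-dimensional constant blows up like the reciprocal of the segment length, and no absolute constant comes out.

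The step is true and the fix is elementary, but it must use the box structure of $\hat S$: choose a coordinate direction $e_i$ with $|\bn_{\hat{\mathbb P}}\cdot e_i|\ge 1/\sqrt3$ and slice along $e_i$ instead of along $\bn_{\hat{\mathbb P}}$. Every chord of the unit cube in a coordinate direction has length exactly one, so the one-dimensional inequality $|w(t_0)|^2\le\|w\|_{L^2(0,1)}^2+2\|w\|_{L^2(0,1)}\|w'\|_{L^2(0,1)}$ applies with an absolute constant at the point where the chord meets $\hat{\mathbb P}$; writing $\hat S\cap\hat{\mathbb P}$ as a graph over its projection onto $\{x_i=0\}$ costs only the bounded Jacobian $|\bn_{\hat{\mathbb P}}\cdot e_i|^{-1}\le\sqrt3$, and integrating gives the uniform reference bound. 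With that repair your proof is complete and self-contained. For comparison, the paper avoids the issue entirely by splitting the cube into a fixed finite set of regular tetrahedra and invoking the known plane--simplex trace inequality (Lemma~4.2 of the cited Hansbo--Hansbo reference, whose proof is the divergence-theorem identity with the vertex farthest from the plane) on each piece; that route is shorter but leans on an external lemma, whereas your reference-cube argument, once corrected, is fully elementary.
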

\begin{proof} The proof follows from the fact that any cubic cell is divided into a finite number of regular tetrahedra
and further applying Lemma~4.2 from \cite{Hansbo2003} on each of these tetrahedra.
\end{proof}

Now we are ready to prove the trace finite element interpolation bounds.
\begin{theorem}\label{thm_int} For arbitrary $v\in H^2(\Gamma)$ the following estimate holds with a constant $c$ independent of $h$, $v$, and on
how $\Gamma_h$ intersects $\mathcal{T}_h$:
 \begin{equation}\label{eq_interp}
\inf_{v_h\in V^\Gamma_h}\left(\|v^e-v_h\|_{L^2(\Gamma_h)} + h\|\nabla_{\Gamma_h}(v^e-v_h)\|_{L^2(\Gamma_h)}\right) \le c h^{2}\|v\|_{H^2(\Gamma)}.
\end{equation}
\end{theorem}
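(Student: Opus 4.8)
The plan is to reduce the surface interpolation estimate on $\Gamma_h$ to the standard $L^2$-based trilinear interpolation estimate on the bulk octree mesh $\mathcal{T}_h$, using the trace inequality \eqref{eq_trace} to pass from volume norms to norms on the planar pieces of $\Gamma_h$. First I would fix $v\in H^2(\Gamma)$ and its normal extension $v^e$; by \eqref{e:3.13}--\eqref{e:3.14}, $v^e\in H^2(U_h)$ with $\|v^e\|_{H^2(U_h)}\lesssim \sqrt h\,\|v\|_{H^2(\Gamma)}$, so $v^e$ is a legitimate bulk function on $\Omega_h\subset U_h$ to which nodal-type interpolation applies. Let $I_h v^e\in V_h$ denote the Lagrange (nodal) trilinear interpolant of $v^e$ on $\mathcal{T}_h$ — or, if $v^e$ is not continuous enough for pointwise nodal values, a Clément/Scott–Zhang quasi-interpolant adapted to the balanced octree — and take $v_h:=(I_h v^e)|_{\Gamma_h}\in V^\Gamma_h$ as the competitor in the infimum.

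Next I would invoke the standard elementwise interpolation bounds on each cube $S\in\mathcal{T}_h$ intersected by $\Gamma_h$: $\|v^e-I_hv^e\|_{L^2(S)}\lesssim h_S^2\,|v^e|_{H^2(S)}$ and $\|\nabla(v^e-I_hv^e)\|_{L^2(S)}\lesssim h_S\,|v^e|_{H^2(S)}$, valid with constants independent of how $\Gamma_h$ cuts $S$ because they are purely bulk estimates (the assumption that the number of refinement levels is bounded keeps the shape-regularity constants of the balanced octree under control). Applying the trace inequality \eqref{eq_trace} on each planar segment $T\subset S_T$ of $\Gamma_h$ — applied to the functions $e:=v^e-I_hv^e$ and $\nabla_{\Gamma_h}e = \bP_h\nabla e$ — gives, on each $T$,
\begin{equation*}
\|e\|_{L^2(T)}^2 \lesssim h_{S_T}^{-1}\|e\|_{L^2(S_T)}^2 + h_{S_T}\|\nabla e\|_{L^2(S_T)}^2,\qquad
\|\nabla_{\Gamma_h}e\|_{L^2(T)}^2 \lesssim h_{S_T}^{-1}\|\nabla e\|_{L^2(S_T)}^2 + h_{S_T}\|\nabla^2 e\|_{L^2(S_T)}^2.
\end{equation*}
Here $\nabla^2 e = \nabla^2 v^e$ on $S_T$ since $I_hv^e$ is trilinear (so $\nabla^2(I_hv^e)$ has no genuine second derivatives beyond the mixed $x_1x_2x_3$ term, which is itself controlled by $h_S^{-1}\|\nabla^2 v^e\|$-type bounds via inverse estimates); in any case the right-hand sides reduce to bulk interpolation-error quantities. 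Plugging in the elementwise bounds yields $\|e\|_{L^2(T)}^2 + h_{S_T}^2\|\nabla_{\Gamma_h}e\|_{L^2(T)}^2 \lesssim h_{S_T}^3\,|v^e|_{H^2(S_T)}^2$.

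Finally I would sum over $T\in\mathcal{F}_h$. Since each cube contains a bounded number of triangular segments and each segment sits in exactly one cube $S_T$, the sum $\sum_T h_{S_T}^3 |v^e|_{H^2(S_T)}^2$ is bounded, up to a constant, by $h\sum_{S\in\mathcal{T}_h,\,S\cap\Gamma_h\neq\emptyset} h_S^2\,|v^e|_{H^2(S)}^2 \le h^2\cdot h\,\|v^e\|_{H^2(U_h)}^2$ (using $h_S\le h$ twice and that all relevant cubes lie in $U_h$), and then $\|v^e\|_{H^2(U_h)}^2\lesssim h\,\|v\|_{H^2(\Gamma)}^2$ by \eqref{e:3.14} gives $\sum_T (\|e\|_{L^2(T)}^2 + h_{S_T}^2\|\nabla_{\Gamma_h}e\|_{L^2(T)}^2)\lesssim h^4\,\|v\|_{H^2(\Gamma)}^2$. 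Taking square roots and replacing $h_{S_T}$ by the uniform $h$ in the gradient term (legitimate since $h_{S_T}\le h$) yields \eqref{eq_interp}. The main obstacle I anticipate is the very first bookkeeping point: ensuring the bulk interpolation operator is well defined and has the right $O(h^2)/O(h)$ local bounds for an $H^2(U_h)$ function on a \emph{balanced but locally refined} octree with hanging nodes — one must either quote a quasi-interpolation result that respects the hanging-node continuity constraints, or exploit that $H^2(U_h)\hookrightarrow C(U_h)$ in three dimensions so that nodal values make sense and the constrained trilinear interpolant still satisfies the standard Bramble–Hilbert estimates with constants depending only on the bounded number of refinement levels. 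Everything downstream is routine once that is pinned down.
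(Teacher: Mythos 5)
Your overall strategy is the same as the paper's: take the normal extension $v^e\in H^2(U_h)$ with $\|v^e\|_{H^2(U_h)}\lesssim\sqrt h\,\|v\|_{H^2(\Gamma)}$ from \eqref{e:3.14}, build a bulk trilinear interpolant, restrict it to $\Gamma_h$ as the competitor, convert the surface norms on each planar piece $T$ to volume norms on $S_T$ via the trace inequality \eqref{eq_trace}, and sum using the bounded number of triangles per cube and $h\,h_{S_T}^{-1}\le C$. Your handling of the gradient term (projecting $\nabla_{\Gamma_h}e=\bP_h\nabla e$ and absorbing the nonzero mixed second derivatives of the trilinear interpolant into the standard $\|\nabla^2(v^e-I_hv^e)\|_{L^2(S)}\lesssim|v^e|_{H^2(S)}$ bound) matches what the paper does implicitly, and the power counting at the end is the same.

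The one place where you and the paper part ways is precisely the point you flag as your ``main obstacle'': how to define a bulk interpolant in $V_h$ on a locally refined octree with hanging nodes. You leave this unresolved, proposing either a Scott--Zhang-type quasi-interpolant respecting the hanging-node constraints or a constrained nodal interpolant; neither is carried out, and the constrained nodal interpolant is not innocuous, since at a hanging node the value is dictated by the parent nodes rather than by $v^e$, so the local Bramble--Hilbert estimates need re-verification. The paper sidesteps all of this with a simple observation: if the maximal cell size $h$ is attained at level $k$, the level-$k$ cells form a \emph{uniform} cartesian subdivision $\mathcal{T}_h^k$ of $\Omega$ with no hanging nodes, so the ordinary nodal trilinear interpolant with respect to $\mathcal{T}_h^k$ (set to $v^e(\bx)$ at nodes in $U_h$ and to $0$ elsewhere) is continuous and automatically lies in $V_h$, because a function that is piecewise trilinear on a coarse cube remains piecewise trilinear on all of its refinements. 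The interpolation error is then measured on coarse cells of size $h$, and the assumption of boundedly many refinement levels converts $h_{S_T}$-weights into $h$-weights; this is exactly why the final constant is independent of how $\Gamma_h$ cuts $\mathcal{T}_h$. So your proof is not wrong, but it is incomplete at the single step you yourself identified; the paper's choice of interpolation space renders that step trivial, and you should adopt it (or else actually prove the quasi-interpolation estimate you invoke, which is a nontrivial piece of work the paper deliberately avoids).
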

\begin{proof}
Assume   the maximum cells size $h$ in $\mathcal{T}_h$ is attained at level $k$ of the octree. Then the cells at the level $k$ provide uniform cartesian subdivision which covers $\Omega$. Denote this subdivision by $\mathcal{T}_h^k$ and the
set of corresponding nodes by $\mathcal{N}_h^k$.
Note that $v\in H^2(\Gamma)$ and $\Gamma\in C^2$ imply $v^e\in H^2(U_h)$ for the normal extension and we may define the nodal piecewise  trilinear interpolant $I_h(v^e)$ such that $I_h(v^e)(\bx)=v^e(\bx)$ for $\bx\in \mathcal{N}_h^k\cap U_h$ and
$I_h(v^e)(\bx)=0$ in other nodes. Since $\mathcal{T}_h^k$ does not contain hanging  nodes, $I_h(v^e)$ is continuous and so it holds $I_h(v^e)\in V_h$.

Using the result in \eqref{eq_trace} and the interpolation properties of $I_h(v^e)$, we get for $v_h=I_h(v^e)|_{\Gamma_h}$:
\[
\begin{split}
\|v^e-v_h\|_{L^2(\Gamma_h)}^2 + h^2\|\nabla_{\Gamma_h}(v^e-v_h)\|_{L^2(\Gamma_h)}^2
= \sum_{T\in\mathcal{F}_h} \|v^e-v_h\|_{L^2(T)}^2 + h^2\|\nabla_{\Gamma_h}(v^e-v_h)\|_{L^2(T)}^2\\
\le \sum_{T\in\mathcal{F}_h} h_{S_T}^{-1}\|v^e-I_h(v^e)\|_{L^2(S_T)}^2 + h^2h_{S_T}^{-1}\|\nabla(v^e-I_h(v^e))\|_{L^2(S_T)}^2
+h^3\|\nabla^2(v^e-I_h(v^e))\|_{L^2(S_T)}^2\\
\le C\,h^4 h_{S_T}^{-1}\sum_{T\in\mathcal{F}_h} \|v^e\|_{H^2(S_T)}^2.
\end{split}
\]
The assumption that the number of local refinement levels is bounded yields $h h_{S_T}^{-1}\le C$.
Hence, it holds
\[
\|v^e-v_h\|_{L^2(\Gamma_h)}^2 + h^2\|\nabla_{\Gamma_h}(v^e-v_h)\|_{L^2(\Gamma_h)}^2\le h^3 \|v^e\|_{H^2(U_h)}^2.
\]
It remains to apply \eqref{e:3.14}.
\end{proof}

\subsection{Error estimates} \label{sectmain}

Now we combine the results derived in the previous sections to prove the first error estimate.
\begin{theorem}\label{Th1}
Assume  $u$ is the solution of \eqref{problem} and let $u_h$ be the discrete solution of the trace finite element method \eqref{FEM}. Then the following estimate holds:
\begin{equation}\label{errorEst}
\|u^e-u_h\|_{H^1(\Gamma_h)}\lesssim h \|f\|_{L^2(\Gamma)}.
\end{equation}
\end{theorem}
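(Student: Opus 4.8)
The plan is to use a standard Strang-type argument for the nonconforming/unfitted finite element method, combining the consistency bound of Lemma~\ref{thm3}, the interpolation estimate of Theorem~\ref{thm_int}, and the discrete coercivity/stability from the well-posedness proposition. First I would establish an inf-sup or coercivity-type lower bound for the discrete bilinear form $a_h$ on $V_h^\Gamma$ in the energy norm $\|\cdot\|_{H^1(\Gamma_h)}$: under the (effective) assumption $c_h+\frac12\Div_{\Gamma_h}\bw_h\ge c_0>0$ discussed after the well-posedness proposition, one has, up to the edge terms already shown to be higher order, $a_h(v_h,v_h)\gtrsim \eps\|\nath v_h\|_{L^2(\Gamma_h)}^2+\|v_h\|_{L^2(\Gamma_h)}^2\gtrsim \|v_h\|_{H^1(\Gamma_h)}^2$ for all $v_h\in V_h^\Gamma$ (treating $\eps$ as fixed and not small, as the theorem's hypotheses allow; in the pure Laplace--Beltrami case one uses the uniform Poincar\'e inequality on $\Gamma_h$ instead).

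Next I would split the error as $u^e-u_h=(u^e-I_h u^e|_{\Gamma_h})+(I_h u^e|_{\Gamma_h}-u_h)=:\eta+\theta_h$, where $I_h u^e|_{\Gamma_h}\in V_h^\Gamma$ is the interpolant from Theorem~\ref{thm_int}. The term $\eta$ is controlled directly by Theorem~\ref{thm_int}: $\|\eta\|_{H^1(\Gamma_h)}\lesssim h\|u\|_{H^2(\Gamma)}\lesssim h\|f\|_{L^2(\Gamma)}$ by the regularity estimate in the first Proposition. For $\theta_h\in V_h^\Gamma$, I would use coercivity and then write $a_h(\theta_h,\theta_h)=a_h(u^e-\eta,\theta_h)-a_h(u_h,\theta_h)$. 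Since $\theta_h\in V_h^\Gamma$, the Galerkin identity for \eqref{FEM} gives $a_h(u_h,\theta_h)=f_h(\theta_h)$, so $a_h(\theta_h,\theta_h)=a_h(u^e,\theta_h)-f_h(\theta_h)-a_h(\eta,\theta_h)$. The first two terms are exactly the consistency residual bounded by Lemma~\ref{thm3}: $|a_h(u^e,\theta_h)-f_h(\theta_h)|\lesssim h^2(\|u\|_{H^1(\Gamma)}+\|f\|_{L^2(\Gamma)})\|\theta_h\|_{H^1(\Gamma_h)}$; the last term is bounded by continuity of $a_h$ (which holds with an $h$-independent constant, again using $\bw_h=\bw^e$ bounded and $c_h$ bounded) and the interpolation bound: $|a_h(\eta,\theta_h)|\lesssim\|\eta\|_{H^1(\Gamma_h)}\|\theta_h\|_{H^1(\Gamma_h)}\lesssim h\|f\|_{L^2(\Gamma)}\|\theta_h\|_{H^1(\Gamma_h)}$. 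Dividing by $\|\theta_h\|_{H^1(\Gamma_h)}$ yields $\|\theta_h\|_{H^1(\Gamma_h)}\lesssim h\|f\|_{L^2(\Gamma)}$, and the triangle inequality finishes the proof.

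I expect the main obstacle to be the bookkeeping around the discrete coercivity of $a_h$ in the $H^1(\Gamma_h)$-norm: one must absorb the non-symmetric convection contribution $-(\bw_h\cdot\nath\theta_h)\theta_h$ via the modified integration-by-parts identity \eqref{inth}, control the extra edge/jump terms $\frac14\sum_T\int_{\partial T}(\bw_h\cdot\llbracket\mathbf m_h\rrbracket)\theta_h^2$ using the $O(h)$ (resp. $O(h^3)$) estimates already quoted after \eqref{A1h}, and verify that for $h$ small enough these perturbations do not destroy positivity. A secondary technical point is that continuity of $a_h$ must be established with a constant independent of $h$ and of how $\Gamma_h$ cuts the mesh; this follows because $\bw_h=\bw^e$, $c_h=c^e$ are $L^\infty$-bounded uniformly and the bilinear form only involves first-order surface derivatives, so Cauchy--Schwarz on $\Gamma_h$ suffices. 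These are routine but need to be spelled out; everything else is a direct assembly of the three preceding results.
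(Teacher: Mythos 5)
Your proposal is correct and follows essentially the same route as the paper: the same splitting via the interpolant $I_h u^e$ of Theorem~\ref{thm_int}, coercivity of $a_h$ applied to $\theta_h=(I_h u^e)|_{\Gamma_h}-u_h$, the Galerkin identity to convert $a_h(u^e-u_h,\theta_h)$ into the consistency residual of Lemma~\ref{thm3}, and continuity plus the interpolation bound for the remaining term. Your additional remarks on verifying discrete $H^1(\Gamma_h)$-coercivity (absorbing the convection and edge-jump terms via \eqref{inth} and the estimates quoted after \eqref{A1h}) only make explicit what the paper leaves implicit in the phrase ``using coercivity, continuity of the finite element bilinear form.''
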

\begin{proof} Consider an interpolant $(I_h u^e)\in V_h$ from Theorem~\ref{thm_int}. The triangle inequality yields
\begin{equation}\label{triang}
\|u^e-u_h\|_{H^1(\Gamma_h)}\leq \|u^e-(I_h u^e)|_{\Gamma_h}\|_{H^1(\Gamma_h)} + \|(I_h u^e)|_{\Gamma_h}- u_h \|_{H^1(\Gamma_h)}.
\end{equation}
The second term in the upper bound can be estimated using coercivity, continuity of the finite element bilinear form,
interpolation properties from Theorem~\ref{thm_int}, and the consistency estimate in Lemma~\ref{thm3}:
\begin{align*}
  \|&\ipue- u_h \|^2_{H^1(\Gamma_h)} \lesssim a_h(\ipue- u_h,\ipue- u_h )\nonumber \\
&= a_h(\ipue -u^e, \ipue- u_h)+a_h(u^e-u_h, \ipue-u_h)\nonumber \\
&\lesssim h \|u\|_{H^2(\Gamma)}\|\ipue-u_h\|_{H^1(\Gamma_h)}+ |a_{h}(u^e,\ipue-u_h)-f_{h}(\ipue-u_h)|\nonumber \\
&\lesssim h \big(\|u\|_{H^2(\Gamma)}+\|f\|_{L^2(\Gamma)}\big)\|\ipue-u_h\|_{H^1(\Gamma_h)}.\nonumber
 \end{align*}
This results in
\begin{equation}\label{aux24}
  \|\ipue- u_h \|_{H^1(\Gamma_h)}\lesssim  h (\|u\|_{H^2(\Gamma)}+\|f\|_{L^2(\Gamma)}).
\end{equation}
The error estimate  \eqref{errorEst} follows from \eqref{triang}, \eqref{aux24}, \eqref{eq_interp}, and $\|u\|_{H^2(\Gamma)}\lesssim\|f\|_{L^2(\Gamma)}$.\quad
\end{proof}

We now apply a duality argument to obtain an $L^2(\Gamma_h)$ error bound.

\begin{theorem} \label{thmL2}
Additionally assume that \eqref{A1} and \eqref{A1h} hold with $\bw$ replaced by $-\bw$.
With the same  $u$, $u_h$  as in Theorem~{\rm \ref{Th1}}, the following error bound holds
\begin{equation}\label{L2_est}
\|u^e-u_h\|_{L^2(\Gamma_h)}\lesssim h^2\,\|f\|_{L^2(\Gamma)}.
\end{equation}
\end{theorem}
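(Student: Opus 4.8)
The plan is to use the standard Aubin--Nitsche duality argument, adapted to the trace finite element setting where the geometric perturbation between $\Gamma$ and $\Gamma_h$ must be tracked carefully. First I would introduce the dual problem on $\Gamma$: find $z\in V$ such that $a(v,z)=\int_\Gamma (u^l_h-u)v\,\ds$ for all $v\in V$, where $u^l_h$ is the lift of the discrete solution to $\Gamma$. Because the transpose of the bilinear form $a$ corresponds to replacing $\bw$ by $-\bw$ (using the integration-by-parts identity from \eqref{intByParts} with $\bw\cdot\bn=0$), the hypotheses \eqref{A1} and \eqref{A1h} with $\bw\mapsto-\bw$ guarantee that this adjoint problem is well-posed and, by the regularity result in Proposition, that $\|z\|_{H^2(\Gamma)}\lesssim\|u^l_h-u\|_{L^2(\Gamma)}$.

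Next I would test the dual problem with $v=u^l_h-u$ and convert everything to integrals over $\Gamma_h$ via the lifting relations \eqref{e:3.16}, \eqref{grad2} and the matrices $\mathbf A_h$, $\mathbf B_h$ introduced in Section~\ref{sectconsistency}. This produces $\|u^l_h-u\|_{L^2(\Gamma)}^2 = a(u^l_h-u,z)$, which I would rewrite as $a_h(u_h-u^e,z^e)$ plus geometric-error remainder terms of the type already appearing in the proof of Lemma~\ref{thm3}; each such remainder carries a factor $h^2$ from \eqref{e:3.17} and the $|\mathbf A_h-\bP_h|,|\mathbf B_h-\bP_h|\lesssim h^2$ bounds, and is controlled by $\|u^e-u_h\|_{H^1(\Gamma_h)}\|z^e\|_{H^1(\Gamma_h)}\lesssim h\cdot\|f\|_{L^2(\Gamma)}\cdot\|z\|_{H^2(\Gamma)}$, i.e.\ already of the desired order $h^2\|f\|$. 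For the main Galerkin term $a_h(u_h-u^e,z^e)$, I would insert the interpolant $I_h z^e\in V_h$ from Theorem~\ref{thm_int}: the part $a_h(u_h-u^e,I_h z^e|_{\Gamma_h})$ is handled by Galerkin orthogonality up to the consistency defect, namely $a_h(u_h-u^e,\varphi_h)=f_h(\varphi_h)-a_h(u^e,\varphi_h)$, which Lemma~\ref{thm3} bounds by $h^2(\|u\|_{H^1(\Gamma)}+\|f\|_{L^2(\Gamma)})\|\varphi_h\|_{H^1(\Gamma_h)}$ — here $\|\varphi_h\|_{H^1(\Gamma_h)}=\|I_h z^e|_{\Gamma_h}\|_{H^1(\Gamma_h)}\lesssim\|z\|_{H^2(\Gamma)}$. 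The remaining part $a_h(u_h-u^e,z^e-I_h z^e|_{\Gamma_h})$ is estimated by continuity of $a_h$, the $H^1$ error bound $\|u^e-u_h\|_{H^1(\Gamma_h)}\lesssim h\|f\|_{L^2(\Gamma)}$ from Theorem~\ref{Th1}, and the interpolation estimate $\|z^e-I_h z^e|_{\Gamma_h}\|_{H^1(\Gamma_h)}\lesssim h\|z\|_{H^2(\Gamma)}$ from Theorem~\ref{thm_int}, again giving an $O(h^2)$ bound. Collecting all contributions yields $\|u^l_h-u\|_{L^2(\Gamma)}^2\lesssim h^2(\|u\|_{H^2(\Gamma)}+\|f\|_{L^2(\Gamma)})\|z\|_{H^2(\Gamma)}\lesssim h^2\|f\|_{L^2(\Gamma)}\|u^l_h-u\|_{L^2(\Gamma)}$, after using $\|u\|_{H^2(\Gamma)}\lesssim\|f\|_{L^2(\Gamma)}$; dividing through gives $\|u^l_h-u\|_{L^2(\Gamma)}\lesssim h^2\|f\|_{L^2(\Gamma)}$, and the equivalence of $\|\cdot\|_{L^2(\Gamma)}$ and $\|\cdot\|_{L^2(\Gamma_h)}$ under the lifting (from \eqref{e:3.16}, \eqref{e:3.17}) plus the triangle inequality with $\|u^e-(u^l_h)^e\|$... more directly, bounding $\|u^e-u_h\|_{L^2(\Gamma_h)}$ by first comparing $u_h$ with $u^e$ on $\Gamma_h$ via the already-established identification, finishes the proof.

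The main obstacle I expect is the careful bookkeeping of the geometric consistency terms that arise when the adjoint identity $a(u^l_h-u,z)$ is pulled back to $\Gamma_h$: one must verify that the adjoint bilinear form genuinely satisfies an integration-by-parts relation of the same structure as \eqref{inth} (with the edge-jump term $\llbracket\mathbf m_h\rrbracket$), so that the well-posedness assumption \eqref{A1h} with $\bw\mapsto-\bw$ is exactly what is needed, and that the edge terms contribute only at order $h^2\|z\|_{H^2}$ — this uses the estimates $|\bP\llbracket\mathbf m_h\rrbracket|\lesssim h^2$ and $\sum_T\int_{\partial T}v_h^2\lesssim h^{-1}\|v_h\|^2_{L^2}+h\|\nabla_{\Gamma_h}v_h\|^2_{L^2}$ quoted from \cite{ORXimanum}, combined with $\bP\bw^e=\bw^e$. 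A secondary technical point is that $z^e$ must be admissible as a test function; since $z\in H^2(\Gamma)$ and $\Gamma\in C^2$, the extension $z^e\in H^2(U_h)$ is fine, and its trace on $\Gamma_h$ lies in $V_h^\Gamma$ only after interpolation, which is precisely why the split into $I_h z^e$ and $z^e-I_h z^e$ above is required. Everything else is a repetition of the estimates already carried out in Lemmas~\ref{thm3}, Theorems~\ref{thm_int} and~\ref{Th1}.
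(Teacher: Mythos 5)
Your proposal is correct and follows essentially the same route as the paper: the paper's proof of Theorem~\ref{thmL2} is exactly this Aubin--Nitsche argument, with the dual problem \eqref{dual} on $\Gamma$, the three-way splitting into the geometric defect $a(e_h^l,w)-a_h(e_h,w^e)$, the interpolation term $a_h(e_h,w^e-\psi_h)$, and the consistency term $a_h(u^e,\psi_h)-f_h(\psi_h)$, each handled with Lemma~\ref{thm3}, Theorem~\ref{thm_int}, and Theorem~\ref{Th1} as you describe. Your choice of $\psi_h$ as the interpolant of the \emph{dual} solution $z^e$ is the right one (the paper's text literally says $\psi_h=I_hu^e$, which appears to be a typo, since its own estimate \eqref{est_i2} requires $\|w^e-\psi_h\|_{H^1(\Gamma_h)}\lesssim h\|w\|_{H^2(\Gamma)}$), so no gap remains.
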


\begin{proof}
Denote $e_h:=(u^e-u_h)|_{\Gamma_h}$ and let  $e^l_h$ be  the
lift of $e_h$ on $\Gamma$.  Consider the problem: Find $w\in
H^1(\Gamma)$, such that
\begin{equation}\label{dual}
a(v,w)= \int_\Gamma e_h^l v\, \rd\bs\qquad\text{for all}~~
v\in H^1(\Gamma).
\end{equation}
The problem is well-posed and the solution $w$ satisfies  $w\in H^2(\Gamma)$ and $\|w\|_{H^2(\Gamma)}\lesssim\|e^l_h\|_{L^2(\Gamma)}$.
Furthermore, $\|w^e\|_{H^1(\Gamma_h)} \lesssim\|w\|_{H^1(\Gamma)}\lesssim \|e^l_h\|_{L^2(\Gamma)}$. Due to \eqref{dual}  we have, for any  $\psi_h\in V_h^\Gamma$,
\begin{equation}\label{est_i0}
\begin{split}
\|e^l_h\|^2_{L^2(\Gamma)}&=a(e^l_h,w)=a(e^l_h,w)-a_h(e_h,w^e)+a_h(e_h,w^e-\psi_h)+a_h(e_h,\psi_h)\\
&=\left[a(e^l_h,w)-a_h(e_h,w^e)\right]+a_h(e_h,w^e-\psi_h)+\left[a_h(u^e,\psi_h)-f_h(\psi_h)\right].
\end{split}
\end{equation}
We let $\psi_h=(I_h u^e)\in V_h$, interpolant given by Theorem~\ref{thm_int}.
The terms on the right hand side of \eqref{est_i0} are treated separately. The bound for the third one follows from
Lemma~\ref{thm3}:
\begin{equation}\label{est_i1}
a_h(u^e,\psi_h)-f_h(\psi_h)\lesssim h^2(\|u\|_{H^1(\Gamma)}+\|f\|_{L^2(\Gamma)})\|\psi_h\|_{H^1(\Gamma_h)}\lesssim h^2(\|u\|_{H^1(\Gamma)}+\|f\|_{L^2(\Gamma)})\|u^e\|_{H^2(\Gamma)}.
\end{equation}
The bound for the second term follows from the continuity of the bilinear form, interpolation properties in Theorem~\ref{thm_int} and the error bound in Theorem~\ref{Th1}:
\begin{equation}\label{est_i2}
a_h(e_h,w^e-\psi_h)\lesssim \|e_h\|_{H^1(\Gamma_h)}\|w^e-\psi_h\|_{H^1(\Gamma_h)}
\lesssim h^2\|f\|_{L^2(\Gamma)}\|w^e\|_{H^2(\Gamma)}.
\end{equation}
For the first term we get the expression similar to \eqref{aux12}:
\begin{multline*}
a(e^l_h,w)-a_h(e_h,w^e) =\eps\int_{\Gamma_h}(\mathbf{A}_h-\bP_h)\nabla_{\Gamma_h} w^e \cdot \nabla_{\Gamma_h}e_h\, \ds_h\\
-\int_{\Gamma_h}((\mathbf{B}_h-\bP_h)\mathbf{w}^e\cdot\nath w^e) e_h\, \ds_h + \int_{\Gamma_h}(\mu_h-1)c^ew^e e_h\, \ds_h.
\end{multline*}
Hence, repeating the same argument as in the proof of Lemma~\ref{thm3}, we get
\begin{equation*}
a(e^l_h,w)-a_h(e_h,w^e)\lesssim h^2\|e_h\|_{H^1(\Gamma_h)}\|w^e\|_{H^1(\Gamma)}.
\end{equation*}
Further, employing the error bound from Theorem~\ref{Th1}, we have
\begin{equation}\label{est_i3}
a(e^l_h,w)-a_h(e_h,w^e)\lesssim h^3\|f\|_{L^2(\Gamma)}\|w^e\|_{H^1(\Gamma)}.
\end{equation}
Combining \eqref{est_i0}--\eqref{est_i3} with the regularity result for the solution of the
dual problem \eqref{dual} completes the proof.\qquad
\end{proof}

\subsection{A posteriori estimate and error indicator}\label{s_adapt}

In this section, we deduce  an a posteriori error estimate for the trace FEM. Based on this estimate we define a residual
type error indicator, which we use for a mesh adaptation purpose.  We will treat only the formulation with surface gradient \eqref{FEM} and assume that the Peclet number is sufficiently small and hence no additional stabilization is needed.

Consider the surface finite element error $e_h=u^e-u_h$ on $\Gamma_h$. We prove an a posteriori bound for the lift of $e_h$ on $\Gamma$, i.e.  $e_h^l=u-u_h^l$ on  $\Gamma$. Thanks to \eqref{A1} the functional $\|[v]\|:=(\eps \|\nabla_\Gamma v\|_{L_2(\Gamma)}^2+\|(c+\frac12\Div_\Gamma\bw)\, v\|_{L_2(\Gamma)}^2)^{\frac12}$ defines a norm of $V$ such that
\begin{equation}
\label{3.1}
\|[ e_h^l]\|\le\sup_{\psi:~\|[ \psi]\|=1} \int_\Gamma a( e_h^l,  \psi) \d \bs.
\end{equation}
Now we let $\psi \in V$ with $\|[ \psi]\|=1$.
For arbitrary $\psi_h\in V_h^\Gamma$, observe the identities
\begin{equation}\label{s4_e1}
\begin{split}
a( e_h^l,  \psi)&= \int_\Gamma f\psi\,d\bs-a(u_h^l,  \psi)= \int_{\Gamma_h} f^e\mu_h(\psi^e-\psi_h)\,d\bs_h+\int_{\Gamma_h} (f^e\mu_h-f^e)\psi_h\,d\bs_h+a_h(u_h,  \psi_h)-a(u_h^l,  \psi)\\
&=\int_{\Gamma_h} f^e\mu_h(\psi^e-\psi_h)\,d\bs_h +\int_{\Gamma_h} f^e(\mu_h-1)\psi_h\,d\bs_h+a_h(u_h,  \psi_h-\psi^e)
\\&\qquad -\left[\eps\int_{\Gamma_h}(\mathbf{A}_h-\bP_h)\nabla_{\Gamma_h} u_h \cdot \nabla_{\Gamma_h}\psi^e\, \ds_h-\frac12\int_{\Gamma_h}((\mathbf{B}_h-\bP_h)\mathbf{w}^e\cdot\nath \psi^e)  u_h\, \ds_h\right. \\
  &\qquad\qquad\left.+ \int_{\Gamma_h}(\mu_h-1)c^eu_h \psi^e\, \ds_h\right].
\end{split}
\end{equation}

We apply elementwise integration by parts to the third term on the righthand side of \eqref{s4_e1}:
\begin{equation}\label{s4_e2}
\begin{split}
a_h(u_h,  \psi_h-\psi^e)&= \int_{\Gamma_h} (\eps\Delta_{\Gamma_h}  u_h-(c_h+\Div_{\Gamma_h}\bw_h) u_h - \mathbf{w}_h\cdot\nath u_h)) (\psi^e-\psi_h) \d \bs_h\\
&\qquad -\frac12\sum_{T\in \F_h}\int_{\partial T}
\eps\llbracket \unablah u_h \rrbracket (\psi^e-\psi_h) \d\br + \frac12\sum_{T\in\F_h}\int_{\partial T}( \bw_h\cdot\llbracket\mathbf{m}_h\rrbracket)u_h  (\psi^e-\psi_h)\d\br.
\end{split}
\end{equation}
Substituting \eqref{s4_e2} into \eqref{s4_e1} and applying the Cauchy inequality elementwise over $\F_h$, we get
\begin{equation}\label{s4_e3}
\begin{split}
a( e_h^l,  \psi)&\lesssim
\left(\sum_{T\in \F_h} |1-\mu_h|^2_{L^\infty(T)}\left(\|f^e\|^2_{L^2(T)}+\|u_h\|^2_{L^2(T)}\right)+
\|\mathbf{A}_h-\bP_h\|^2_{L^\infty(T)}\|\nabla_{\Gamma_h} u_h\|^2_{L^2(T)}\right.\\
&\left.\phantom{\sum_{T\in \F_h}}+
\|\mathbf{B}_h-\bP_h\|^2_{L^\infty(T)}\|u_h\|^2_{L^2(T)}
\right)^{\frac12}\|\psi^e\|_{H^1(\Gamma_h)}
\\&+
\left(\sum_{T\in \F_h}\left[\eta_R(T)^2+\eta_E(T)^2 \right]\right)^{\frac12}
\left(\sum_{T\in \F_h}\left[h^{-2}_{S_T}\|\psi^e-\psi_h\|_{L^2(T)}^2
+\sum_{e\in\partial T}h_{S_T}^{-1}\|\psi^e-\psi_h\|_{L^2(e)}^2\right]
\right)^{\frac12},
\end{split}
\end{equation}
with
\[
\begin{split}
\eta_R(T)^2&=h^2_{S_T}\|f_h+\eps\Delta_{\Gamma_h}  u_h-(c_h+\Div_{\Gamma_h}\bw_h) u_h - \mathbf{w}_h\cdot\nath u_h\|^2_{L^2(T)}\\
\eta_E(T)^2&=\sum_{e\in\partial T}h_{S_T}\left(\|\llbracket \eps\unablah u_h \rrbracket\|_{L^2(e)}^2+\|\bw_h\cdot\llbracket\mathbf{m}_h\rrbracket\|_{L^2(e)}^2\right)
\end{split}
\]
Combining \eqref{3.1} and \eqref{s4_e3} leads to an a posteriori error estimate, where $\psi$-dependent terms on the
righthand side should be handled further.

Regarding $\psi$-dependent terms in \eqref{s4_e3} we note the following: due to \eqref{e:3.16} and \eqref{grad2}
we have $\|\psi^e\|_{H^1(\Gamma_h)} \lesssim \|\psi\|_{H^1(\Gamma)} $. For the second term, in \cite{DemlowOlsh} it was proved
that there exists such $\psi_h\in V_h$ such that
\begin{equation}\label{A2}
\sum_{T\in \F_h}\left[h^{-2}_{S_T}\|\psi^e-\psi_h\|_{L^2(T)}^2
+\sum_{e\in\partial T}h_{S_T}^{-1}\|\psi^e-\psi_h\|_{L^2(e)}^2\right]\lesssim \|\psi\|_{H^1(\Gamma)}.
\end{equation}
In \cite{DemlowOlsh}, the estimate \eqref{A2} was proved for a regular (locally refined) tetrahedra bulk triangulation and $P_1$ elements. To extend this result to octree cubic grids, one has to deal with certain technical difficulties related
to the construction of a Scott-Zhang type interpolant, handling algebraic constraints in hanging nodes and hierarchical basis functions.
Such analysis will be done elsewhere. Let us \textit{assume} \eqref{A2} to be true for octree  grids and
continuous piecewise trilinear functions.

Finally, if we assume that local grid refinement leads to better local surface reconstruction, i.e. \eqref{e:3.9}
and \eqref{e:3.10} can be formulated locally, then the first term on the right-hand side of \eqref{s4_e3} is heuristically of higher order. We may assume $|1-\mu_h|^2_{L^\infty(T)}+\|\mathbf{A}_h-\bP_h\|^2_{L^\infty(T)}+
\|\mathbf{B}_h-\bP_h\|^2_{L^\infty(T)}\lesssim h^4_{S_T}\|\bH\|^2_{L^\infty(\bp(T))}.$ This suggests introducing also
the geometric error indicator
\[
\eta_G(T)^2:=h^4_{S_T}\|\bH_h\|^2_{L^\infty(T)}\left(\|f^e\|^2_{L^2(T)}+\|u_h\|^2_{H^1(T)}\right),
\]
where $\bH_h$ is any Hessian reconstruction on $\Gamma_h$.

Thus, for the purpose of local mesh adaptation we use the following error indicator:
\begin{equation}\label{indicator}
\eta(T):=(\alpha_r\eta_R(T)^2+\alpha_e\eta_E(T)^2+\alpha_g\eta_G(T)^2)^{\frac12}.
\end{equation}
with some parameters $\alpha_r,\alpha_e,\alpha_g\ge 0$.
We set $\alpha_g$ equal to 0 or 1 depending on whether we wish to account (explicitly) for geometric errors or not. For regular elliptic
problems we set $\alpha_r=\alpha_e=1$, while for transport dominated problems more weight is put
on the edge error indicator (see next section).
We also note that the second edge term in the definition of $\eta_E(T)^2$ is of higher order. Our numerical experience suggests that it can excluded
without any notable change in the performance of an adaptive method.

Although we deduced the indicator based on partially heuristic arguments, numerical experiments with adaptive method
based on $\eta(T)$ and standard ``maximum'' marking strategy  show optimal convergence of the error in energy and $L^2$ norms.

\section{Numerical examples}\label{s_numer}

\subsection{Smooth solutions on a sphere and a torus}

\begin{example}\label{exam1}\rm
 As a first test problem we consider the
Laplace--Beltrami type equation on the unit sphere:
\[
-\Delta_\Gamma u +u =f\quad\mbox{on}~\Gamma,
\]
with $\Gamma=\{ \bx\in\R^3\mid \|\bx\|_2 = 1\}$ and $\Omega= (-2,\,2)^3$.
The source term $f$ is taken such that the solution is given by
\[
    u(\bx)= \frac{a}{\|\bx\|^3}\left(3x_1^2x_2 - x_2^3\right),\quad
    \bx=(x_1,x_2,x_3)\in\Omega,
\]
with $a=12$.
Using the representation of $u$ in spherical coordinates one can
verify  that $u$ is an eigenfunction of $-\Delta_\Gamma$:
$
    u( r, \phi, \theta) = a\sin(3\phi)\sin^3\theta$,
$ -\Delta_\Gamma u+u = 13 u =: f(r,\phi,\theta).
$
Both $u$ and $f$ are constant along normals at $\Gamma$.
\end{example}

\begin{figure}[ht!]
\begin{center}
  \includegraphics[width=0.45\textwidth]{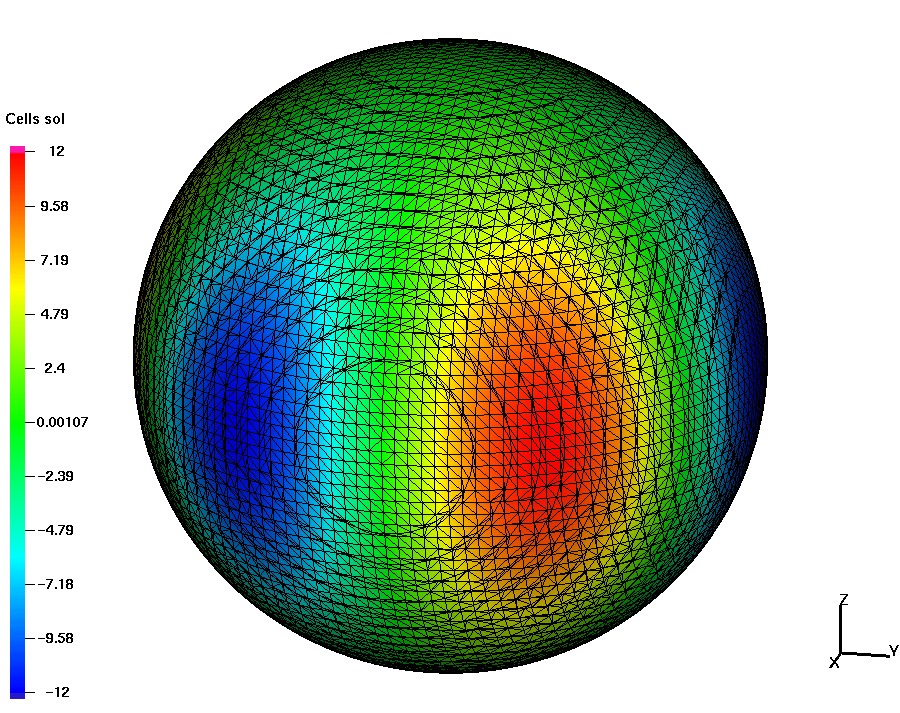}\qquad
  \includegraphics[width=0.45\textwidth]{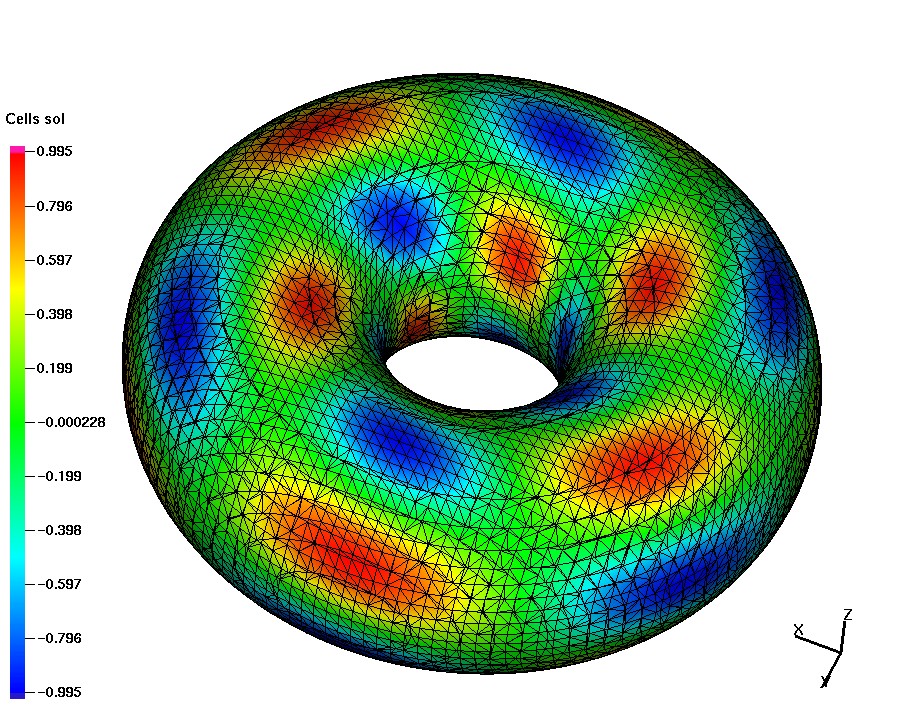}
\caption{\label{fig:curv1} The  numerical solutions and surface meshes from Examples~\ref{exam1} and \ref{exam2}.}
\end{center}
\end{figure}

\begin{example}\label{exam2}\rm
 In the second example, we consider a  torus instead of the unit sphere and the same equation:
$\Gamma\subset\Omega=(-2,2)^3$, with
$\Gamma = \{ \bx\in\Omega \mid r^2 = x_3^2 + (\sqrt{x_1^2 + x_2^2} - R)^2\}$.
We take $R= 1$ and $r= 0.6$. In the coordinate system $(\rho, \phi,
\theta)$, with
\[
    \bx = R\begin{pmatrix}\cos\phi \\ \sin\phi \\ 0 \end{pmatrix}
    + \rho\begin{pmatrix}\cos\phi\cos\theta \\ \sin\phi\cos\theta \\
    \sin\theta \end{pmatrix},
\]
the $\rho$-direction is normal to $\Gamma$, $\frac{\partial \bx}{\partial\rho}\perp\Gamma$ for $\bx\in\Gamma$.
Thus, the following solution $u$ and corresponding right-hand side
$f$ are constant in the normal direction:
\begin{equation}\label{polar2}
  \begin{split}
    u(\bx)&= \sin(3\phi)\cos( 3\theta + \phi),\\
    f(\bx)&= r^{-2} (9\sin( 3\phi)\cos( 3\theta + \phi))  - (R + r\cos( \theta))^{-2}(-10\sin( 3\phi)\cos(3\theta + \phi) - 6\cos( 3\phi)\sin( 3\theta + \phi)) \\
          & \quad -(r(R + r\cos( \theta))^{-1}(3\sin( \theta)\sin(
          3\phi)\sin( 3\theta + \phi))+u(\bx).
  \end{split}
\end{equation}
\end{example}

We compute numerical solutions to Examples~\ref{exam1} and~\ref{exam2} using the trace FE methods \eqref{FEM} and~\eqref{FEM_fg}
on a sequence of octree bulk grids. The initial grid was uniform with $h=\frac14$. Further the grid was gradely refined towards the surfaces. All linear algebra systems in this and further experiments were solved with the help of PETSc library: We computed LU factorizations of diagonally scaled stiffness matrices.
Finite element errors and convergence rates are shown in Table~\ref{tab1}. Both variants
demonstrate second order convergence in $L^2$ surface norm and close to second order in $L^\infty$ surface norm.
Computed solutions and final meshes are visualized on Figure~\ref{fig:conv1}.

\begin{table}
\begin{center}
\caption{Surface gradient and full gradient trace FEM convergence for uniform mesh
refinement and smooth solutions. \label{tab1}}
\small
\begin{tabular}{rr|llll|llll}\hline
&&\multicolumn{4}{|c|}{surface gradient variant \eqref{FEM}} &   \multicolumn{4}{|c}{full gradient variant~\eqref{FEM_fg}} \\
&\#d.o.f. & $L^2$-norm & rate & $L^\infty$-norm& rate &  $L^2$-norm & rate & $L^\infty$-norm& rate \\ \hline\\[-2ex]
sphere
&292       & 2.672e-1 &      & 6.631e-1 &       & 6.041e-1 &      & 1.257e-0 & \\
&1398	   & 5.813e-2 & 2.20 & 1.462e-1 & 2.18  & 1.418e-1 & 2.09 & 3.959e-1 & 1.67 \\
&5960      & 1.366e-2 & 2.09 & 4.032e-2 & 1.86  & 3.566e-2 & 1.99 & 1.031e-1 & 1.94 \\
&24730     & 3.364e-3 & 2.02 & 1.138e-2 & 1.86  & 8.891e-3 & 2.00 & 2.582e-2 & 2.00 \\    \hline\\[-2ex]
torus
& 297      & 2.415e-1 & & 7.898e-1 & & 3.199e-1 & & 8.861e-1 & \\
&1289      & 5.047e-2 & 2.26 & 2.123e-1 & 1.90 & 8.406e-2 & 1.93 & 4.437e-1 & 0.98 \\
&5001      & 1.049e-2 & 2.27 & 5.400e-2 & 1.98 & 1.956e-2 & 2.10 & 8.562e-2 & 2.37 \\
&20073     & 2.367e-3 & 2.15 & 1.341e-2 & 2.01 & 4.979e-3 & 1.97 & 2.184e-2 & 1.97 \\ \hline
\end{tabular}
\end{center}
\end{table}

\subsection{Smooth solutions on more complicated geometries}

To demonstrate the flexibility of the method with respect to  the form of $\Gamma$, we consider the Laplace-Beltrami equation on two geometrically more complicated surfaces. Both examples of surfaces can be found in
\cite{DE2013}.

\begin{figure}[ht!]
\begin{center}
  \includegraphics[width=0.45\textwidth]{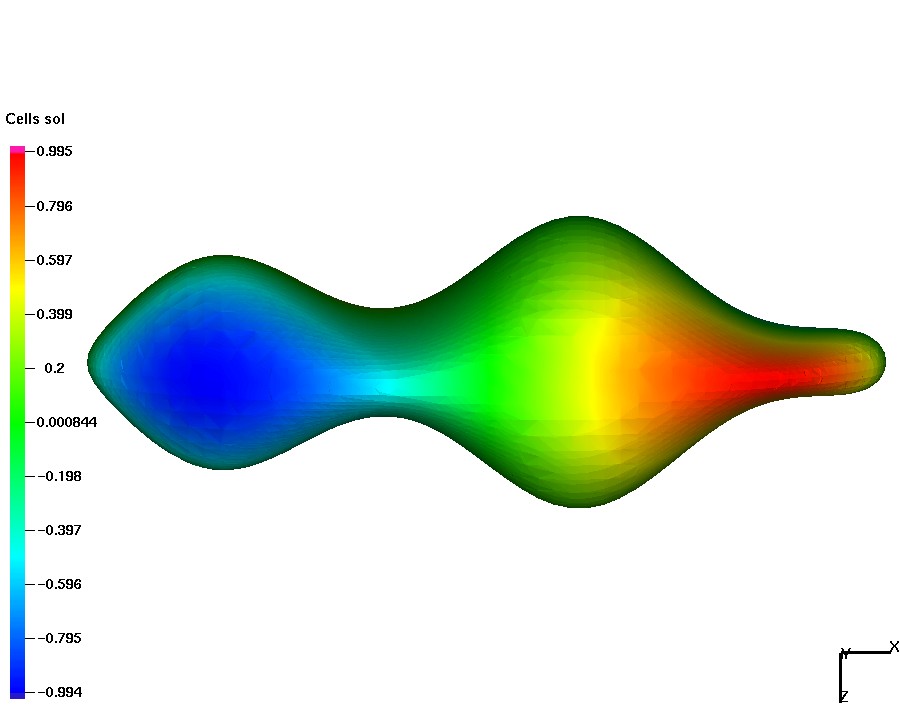}\qquad
  \includegraphics[width=0.45\textwidth]{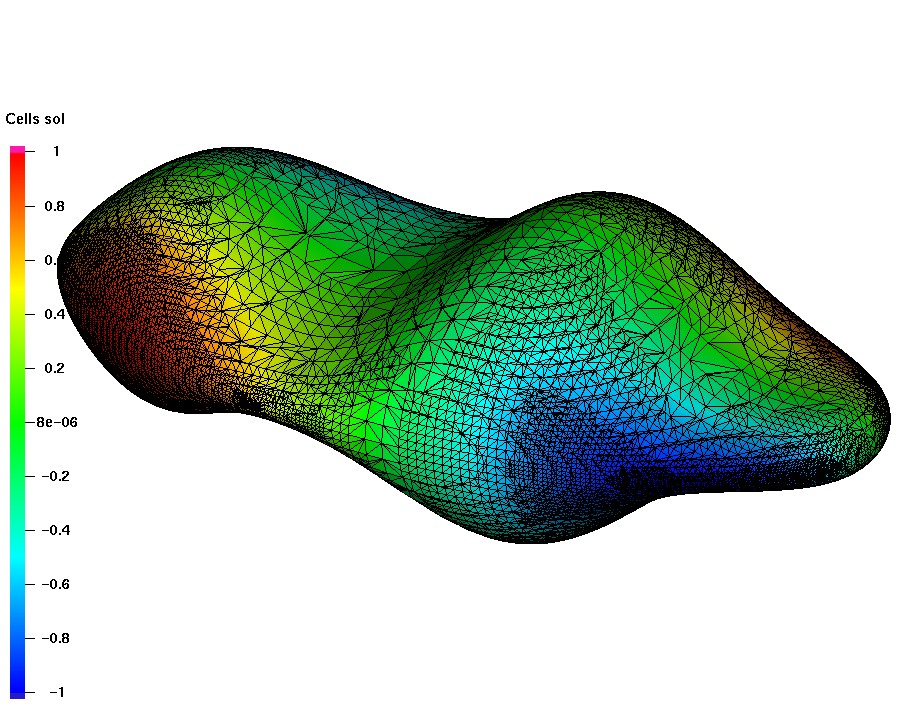}
\caption{\label{fig:curv2} Illustration of the surface and solution from Example~\ref{exam3}. The right figure shows also triangulation after 7 steps of adaptation based in the error indicator.}
\end{center}
\end{figure}

\begin{figure}[ht!]
\begin{center}
  \includegraphics[width=0.4\textwidth]{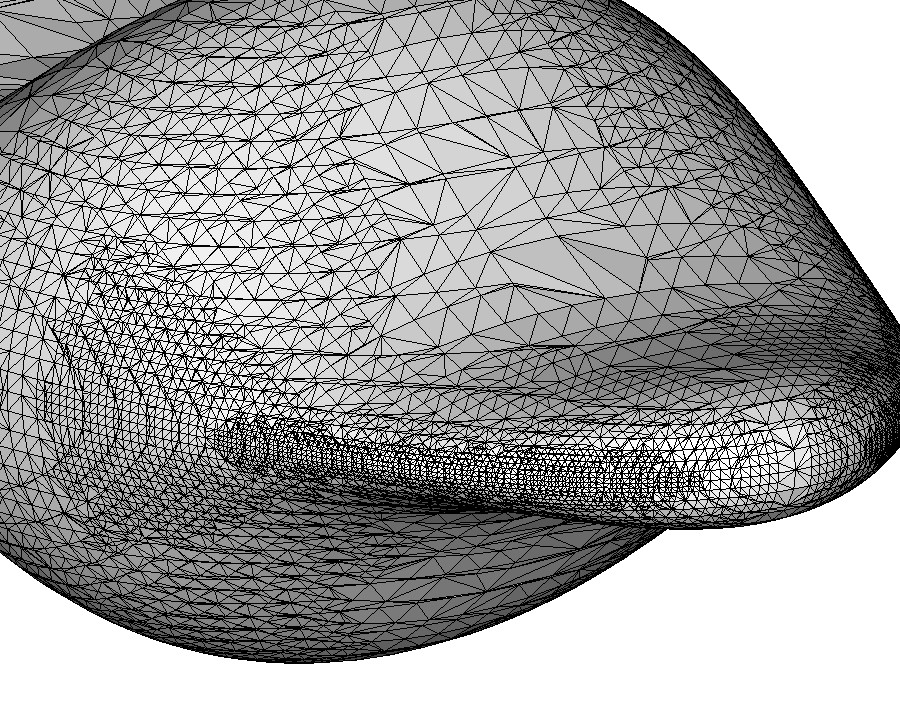}\qquad\qquad
  \includegraphics[width=0.45\textwidth]{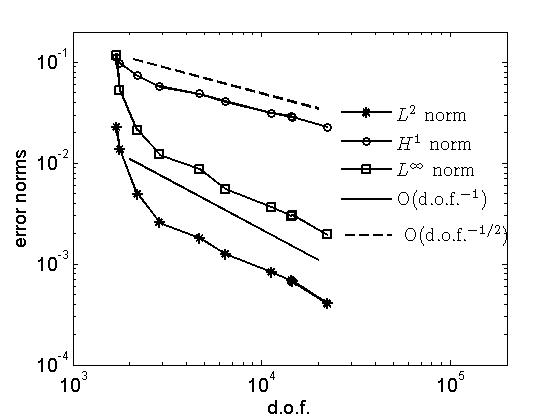}
\caption{\label{fig:curv22} Left: Adaptive trace FEM convergence in Example~\ref{exam3}. Right: The zoom of the trace surface mesh in Example~\ref{exam3}. The adaptation was based on the residual  indicator accounting for geometric errors. }
\end{center}
\end{figure}

\begin{example}\label{exam3}\rm
The fist surface is given by the zero of the level set function
\[
\phi(\bx)=\frac14 x_1^2+x_2^2+\frac{4x_3^2}{(1+\frac12\sin(\pi x_1))^2}-1.
\]
The exact solution to \eqref{problem} with $\bw=0$, $\ep=1$, $c=1$, is taken very regular, $u=x_1x_2$ on $\Gamma$. The surface is illustrated in Figure~\ref{fig:curv2}. The normal vector on $\Gamma$ is $\bn=\nabla\phi/|\nabla\phi|=:(n_1,n_2,n_3)^T$ and  $\bH$ can be computed from $\bH=\nabla_\Gamma\bn$. The entries of $\bH$ are given by
\[
 \bH_{im}=\frac1{|\nabla\phi|}\left(\phi_{x_ix_m}
-\frac{\phi_{x_i}(\sum_{k=1}^3\phi_{x_k}\phi_{x_kx_m})+\phi_{x_m}(\sum_{j=1}^3\phi_{x_j}\phi_{x_ix_j})}{|\nabla\phi|^2}
+\frac{\phi_{x_i}\phi_{x_m}(\sum_{j=1}^3\sum_{k=1}^3\phi_{x_k}\phi_{x_kx_j})}{|\nabla\phi|^4}\right)
\]
for $i,m=1,2,3$. The right-hand side is then given by
\[
f(\bx)=x_1x_2+2n_1(\bx)n_2(\bx)+\mbox{tr}(\bH(\bx))(x_1n_2(\bx)+x_2n_1(\bx)),\quad\bx\in\Gamma.
\]
Figure~\ref{fig:curv2} shows the numerical solution and plots it over the trace mesh, which results after 9 steps of adaptation, starting with the uniform cubic mesh with $h=1/4$.
Here and in any further adaptive mesh refinement we employ a ``maximum'' marking strategy in which  all volume
cubes $S$ from $\omega_h$ with $\eta(S)>\frac12\max_{S\in\omega_h}\eta(S)$ are marked for further refinement.
Note that some adjunct cubes also may need refinement if one wishes to keep the octree balanced.
Figure~\ref{fig:curv22} (right) displays the  finite element error reduction if the mesh adaptation process is based
on the error indicator \eqref{indicator}. The results demonstrate optimal convergence order of the adaptive trace FEM
with respect to the total number of degrees of freedom in $L^2$, $H^1$ and $L^\infty$ surface norms. The results are shown for the surface gradient variant of the method \eqref{FEM}. Here and further ``number of d.o.f.'' means only the number of active degrees of freedom, which is equal to the dimension of the resulting system of linear algebraic equations. We set $\alpha_g=1$  in the error indicator  to account for geometric errors. The left plot in Figure~\ref{fig:curv22} zooms the trace surface mesh.  From this plot we see that the mesh refinement generally happens in regions with higher surface curvatures.
\end{example}

\begin{figure}[ht!]
\begin{center}
  \includegraphics[width=0.5\textwidth]{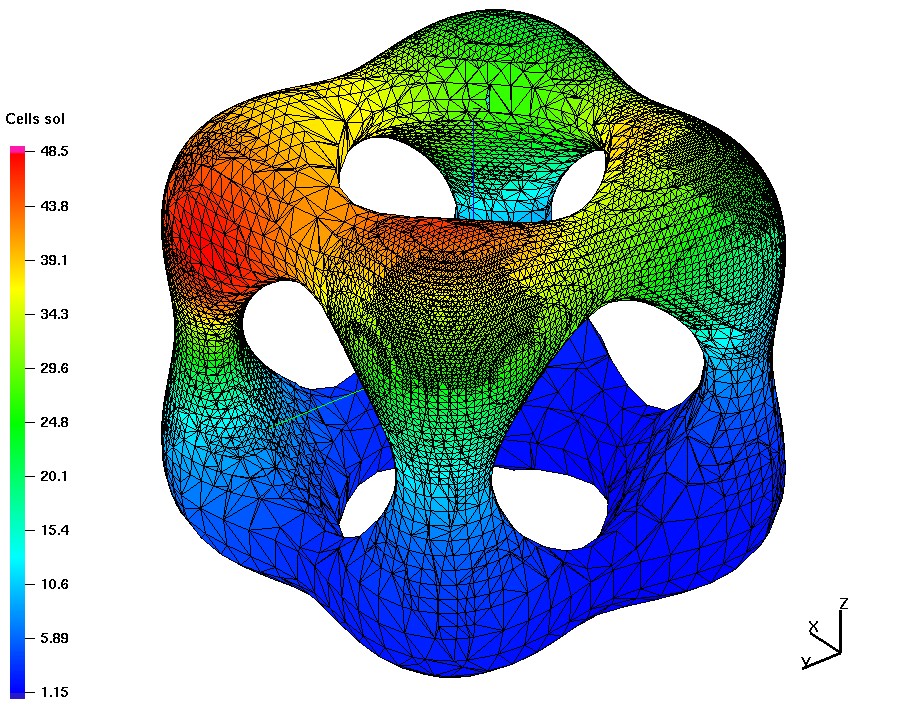}\qquad
\caption{\label{fig:curv4} The surface, numerical solution and adaptive mesh from Example~\ref{exam4}.}
\end{center}
\end{figure}
\begin{example}\label{exam4}\rm This is another example of a more complicated domain, which  is homeomorphic to the sphere
with 6 handles, see Figure~\ref{fig:curv4}. The surface is given implicitly as the zero level set of
\[
\phi=(x_1^2+x_2^2-4)^2+(x_2^2-1)^2+(x_2^2+x_3^2-4)^2+(x_1^2-1)^2+(x_1^2+x_3^2-4)^2+(x_3^2-1)^2-13.
\]
We solve the Laplace-Beltrami equation with right-hand side $f=100\sum_{j=1}^4\exp{(-|\bx-\bx^j|^2)}$, with
\[
\bx^1=(-1,1,2.04),~~  \bx^2=(1,2.04,1),~~ 
\bx^3=(2.04,0,1),~~  \bx^4=(-0.-1,-2.04).
\]
The points are close to the surface and the right-hand side is varying rapidly in vicinities of these 4 points, and hence the same is expected from the solution. The solution and the grid resulted after 12 steps of refinement are visualized in Figure~\ref{fig:curv4}. The refinement was based on the error indicator \eqref{indicator} with $\alpha_g=1$.
\end{example}

\subsection{Laplace-Beltrami problem with point singularity}
\begin{figure}[ht!]
\begin{center}
  \includegraphics[width=0.45\textwidth]{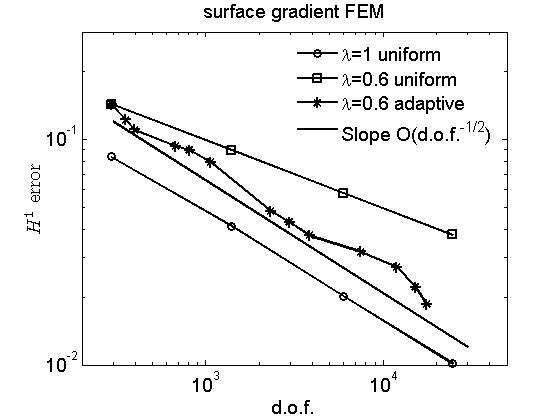}\qquad
  \includegraphics[width=0.45\textwidth]{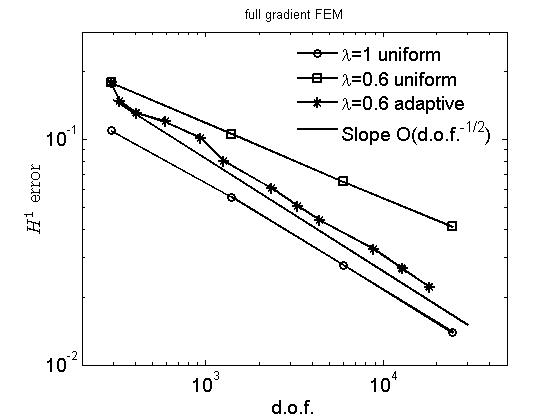}\\
   \includegraphics[width=0.45\textwidth]{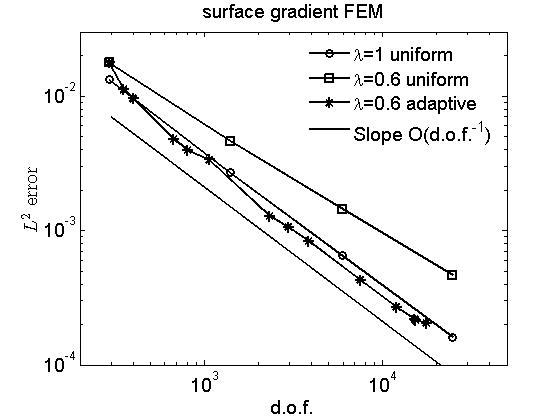}\qquad
  \includegraphics[width=0.45\textwidth]{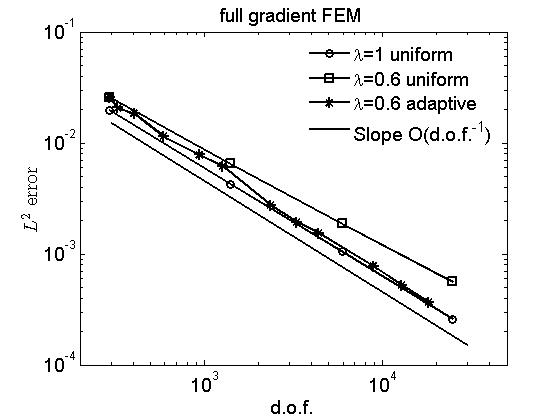}
\caption{\label{fig:lambda} Decrease of the error in $H^1(\Gamma_h)$-norm (upper plots), $L^2(\Gamma_h)$-norm (bottom plots)   for the adaptive algorithm and Example~\ref{exam5}.
Left column plots show results for the surface gradient  formulation \eqref{FEM}, while the right  column plots show results for the full gradient  formulation \eqref{FEM_fg}.}
\end{center}
\end{figure}

\begin{example}\label{exam5}\rm
For the next test problem we consider the Laplace-Beltrami equation on the unit sphere.
The solution and the source term  in spherical coordinates are given by
\begin{equation}\label{polar}
    u= \sin^\lambda\theta\sin\phi,\qquad  f=(1+\lambda^2+\lambda)\sin^\lambda\theta\sin\phi+(1-\lambda^2)\sin^{\lambda-2}\theta\sin\phi.
\end{equation}
One verifies
\[
\nabla_\Gamma u=\sin^{\lambda-1}\theta\left(\frac12\sin2\phi(\lambda\cos^2\theta-1),\, \sin^2\phi(\lambda\cos^2\theta-1)+1,\,-\frac12\lambda\sin2\theta\sin\phi\right)^T.
\]
 For $\lambda<1$ the solution $u$ is singular at the north and south poles of the sphere so that $u\in H^1(\Gamma)$, but $u\notin H^2(\Gamma)$. Following \cite{DD07}  we set $\lambda=0.6$ to model the point singularity  and
 contrast it to the regular problem with $\lambda=1$.

 Results produced by the adaptive algorithm   are shown in Figure~\ref{fig:lambda}, where they are compared to
 the results for uniform grid refinement. As expected, the regular refinement leads to a suboptimal convergence
 for the singular case of $\lambda=0.6$.  Adaptive refinement driven by the error indicator~\eqref{indicator} leads to
 optimal convergence rates in $L^2$ and $H^1$ surface norms.
 Note that reliability of the error indicator for $H^1$ error norm was proved in \cite{DemlowOlsh} for tetrahedral meshes. A posteriori error analysis in other norms remains an open question.

  The left column in Figure~\ref{fig:lambda}  displays error decrease for the surface gradient  formulation \eqref{FEM}, while the right  column of plots displays error decrease for  the full gradient  formulation \eqref{FEM_fg}. Similar to regular problems and regular refinement in Examples~\ref{exam1} and~\ref{exam2}, the adaptive algorithm shows close performance for both formulations  producing slightly more accurate results for the surface gradient formulation \eqref{FEM}.

\begin{figure}[ht!]
\begin{center}
  \includegraphics[width=0.35\textwidth]{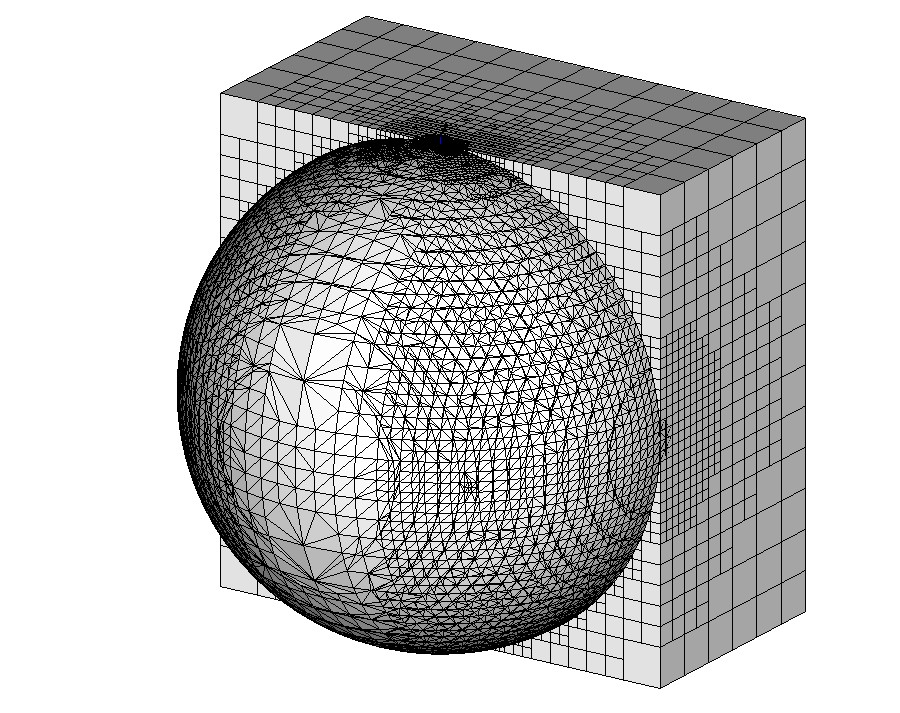}\quad
  \includegraphics[width=0.35\textwidth]{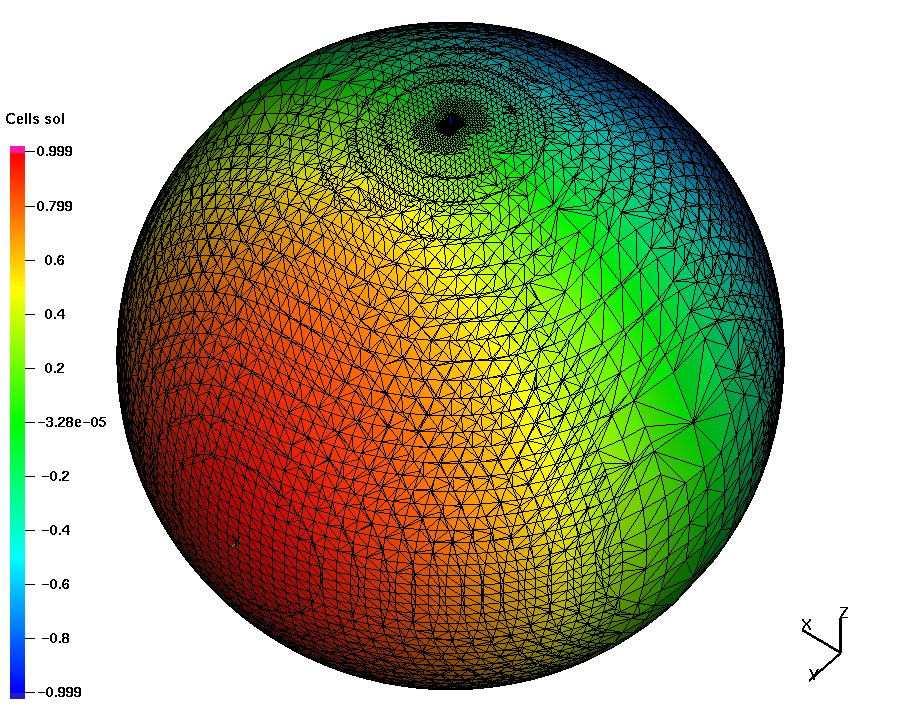}  \quad
  \includegraphics[width=0.25\textwidth]{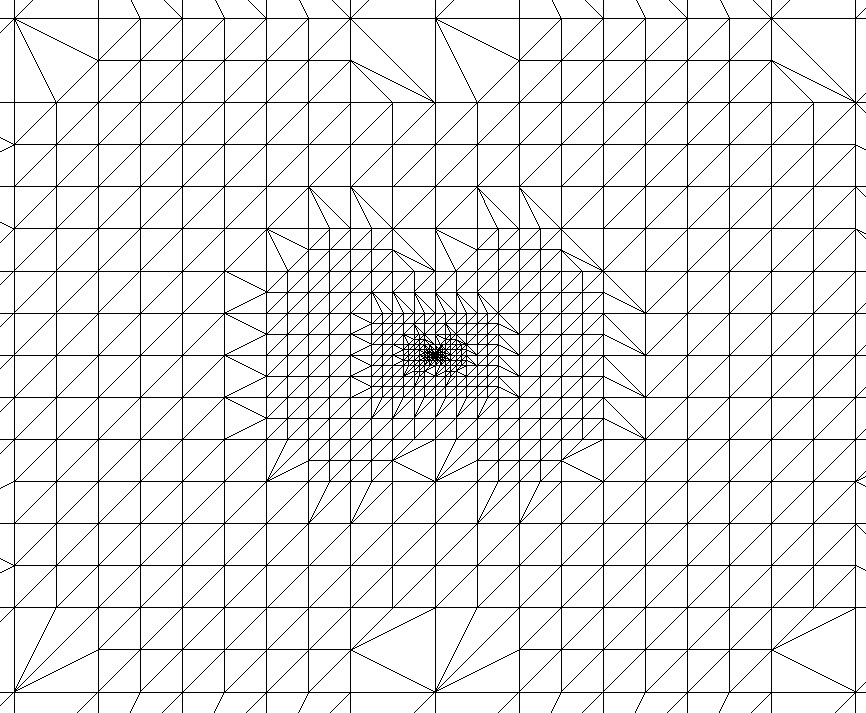}
\caption{\label{fig:lambda2} Left and middle plots display the cutaway of the bulk and trace surface meshes in Example~\ref{exam5} with $\lambda=0.6$ after 12 steps of refinement. The middle plot is shaded to reflect the numerical solution. The right picture show the 20-x zoom in of the surface mesh near the north pole.}
\end{center}
\end{figure}

Figure \ref{fig:lambda2} (left) displays a cutaway view which includes both the adaptively refined bulk and surface meshes. The meshes are shown after the 12 refinement steps.  The middle picture in Figure \ref{fig:lambda2} shows the surface
mesh superimposed on numerical solution. The coarsest mesh is in the regions with the smallest solution gradient.  Figure \ref{fig:lambda2} (right)  displays the surface mesh near the north pole magnified 20 times. This local mesh appears more structured, since the surface is locally close to a plane which cuts through a regular bulk mesh. 
\end{example}

\subsection{Convection-diffusion problem with an internal layer}

We now perform several tests for the advection-diffusion problem as in \eqref{problem}, with nonzero advection field $\bw$.
As usual, the properties of the problem essentially depend on the value of the dimensionless Peclet number. The Peclet number can be defined similar to volumetric case as $Pe=\frac{L W}{2\eps}$, where $L$ is a characteristic problem scale (say, the diameter of a closed surface $\Gamma$) and $W$ is a characteristic advection velocity. For low values of the Peclet numbers, the problem is close to the Laplace-Beltrami equation, while for higher  Peclet numbers, the problem may demonstrate behaviour typical to singular-perturbed equations, e.g., its solution may exhibit internal layers. The example considered below is chosen to illustrate the ability of the trace FEM to handle these different cases by employing computational tools developed for volumetric finite elements:
stablization, error indicators, and layer fitted meshes. Numerical results will show that the performance of such enhanced trace FEM appears to be similar to its volumetric counterparts applied to bulk advection-diffusion problems.

For the advection-diffusion problem we set weights $\alpha_r, \alpha_e$ in the error indicator \eqref{indicator} dependent on the
Peclet number as recommended in \cite{Verfurth2} for  the planar advection-diffusion problem:
\[
\alpha_r=\min\{\eps^{-1},h^{-2}_{S_T}\},\quad \alpha_e=\min\{\eps^{-1},h^{-1}_{S_T}\eps^{-\frac12}\}.
\]
In experiments below geometry does not play an important role and so we set $\alpha_g=0$.

\begin{example}\label{exam6}\rm
In this example, the stationary problem \eqref{problem} is solved on the unit sphere $\Gamma$, with
 the velocity field
$$
\mathbf{w}(\bx)=(-x_2\sqrt{1-x_3^2},x_1\sqrt{1-x_3^2},0)^T,
$$
which  is tangential to the sphere.
We set  $c = 1$ and consider $\varepsilon\in [10^{-6},1]$. Letting $W:=\|\mathbf{w}\|_{L^\infty(\Gamma)}$ and $L=\mbox{diam}(\Gamma)$, we compute the Peclet number as $Pe=\eps^{-1}$.

For the exact solution to \eqref{problem}, we take the function
$$
u(\mathbf{x})= {x_1 x_2}\mathrm{arctan}\left(\frac{2x_3}{\sqrt{\varepsilon}}\right).
$$
The corresponding right-hand side function $f$ is given
by
$$
f(\bx)=
\frac{12 \varepsilon^{3/2} x_1x_2x_3}{\varepsilon+4 x_3^2}+
\frac{16\varepsilon^{3/2}(1-x_3^2)x_1x_2x_3}{(\varepsilon+4 x_3^2)^2}
 + (6\varepsilon x_1x_2+\sqrt{x_1^2+x_2^2}(x_1^2-x_2^2))\mathrm{arctan}\left(\frac{2x_3}{\sqrt{\varepsilon}}\right)+u.
$$
When $\eps$ gets smaller, a layer  of the width $O(\eps^\frac12)$ is forming in $u$  along the equator of the sphere ($\{x_3=0\}\cap\Gamma$). The formation of characteristic  internal layers of $O(\eps^{\frac12})$-width is typical for advection-diffusion problems.

In this paper, we consider equations posed on closed surfaces, therefore we are not treating parabolic or exponential boundary layers.
\end{example}

\subsubsection{Lower Peclet number case}
\begin{figure}[ht!]
\begin{center}
  \includegraphics[width=0.45\textwidth]{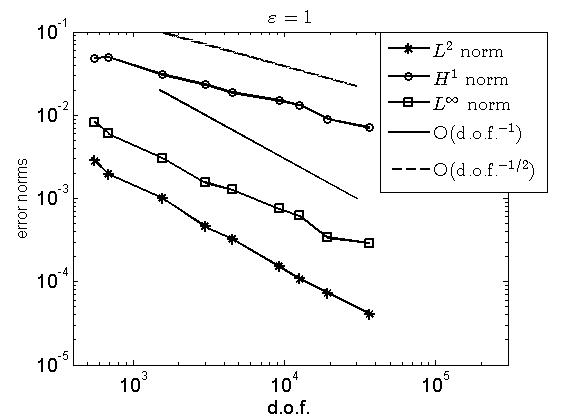}\qquad
  \includegraphics[width=0.45\textwidth]{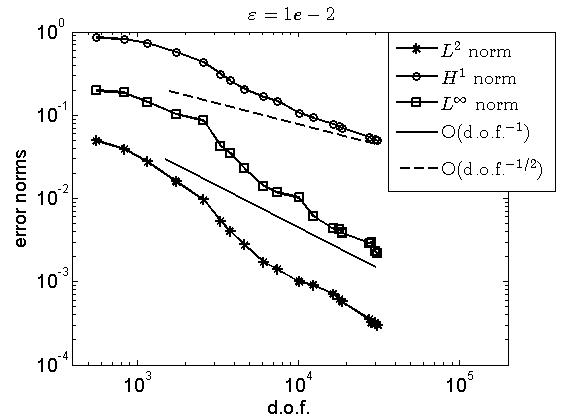}\\
\caption{\label{fig:conv1} Decrease of the error in the $L^2(\Gamma)$, $H^1(\Gamma)$ and $L^\infty(\Gamma)$ norms  for the advection-diffusion problem from Example~\ref{exam6} with  $Pe=1$ (left) and $Pe=100$ (right). The problem is solved on a sequence of adaptively refined grids.}
\end{center}
\end{figure}

\begin{figure}[ht!]
\begin{center}
  \includegraphics[width=0.45\textwidth]{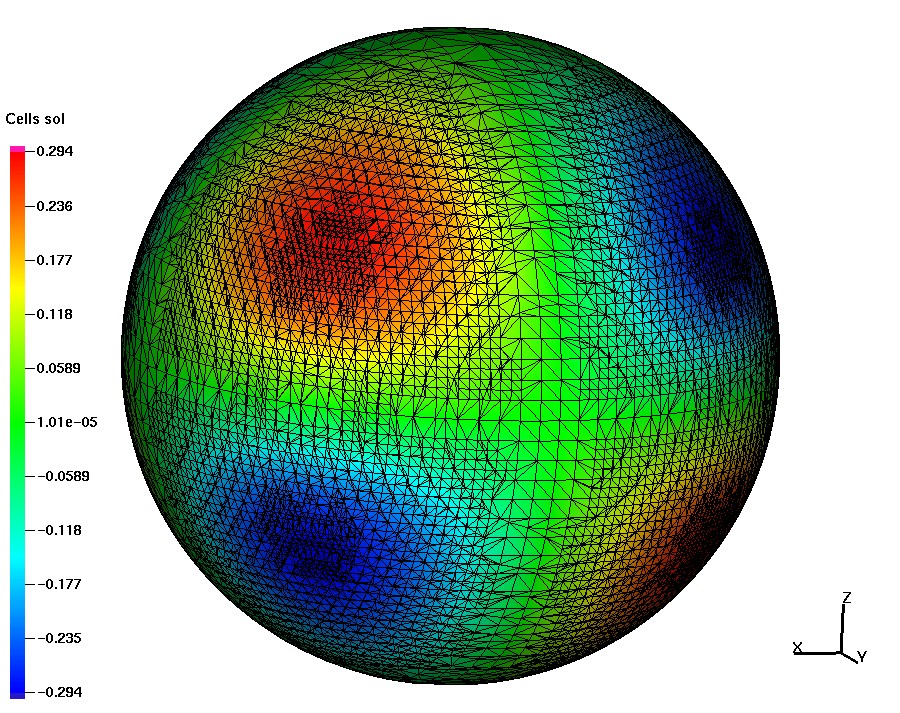}\qquad
  \includegraphics[width=0.45\textwidth]{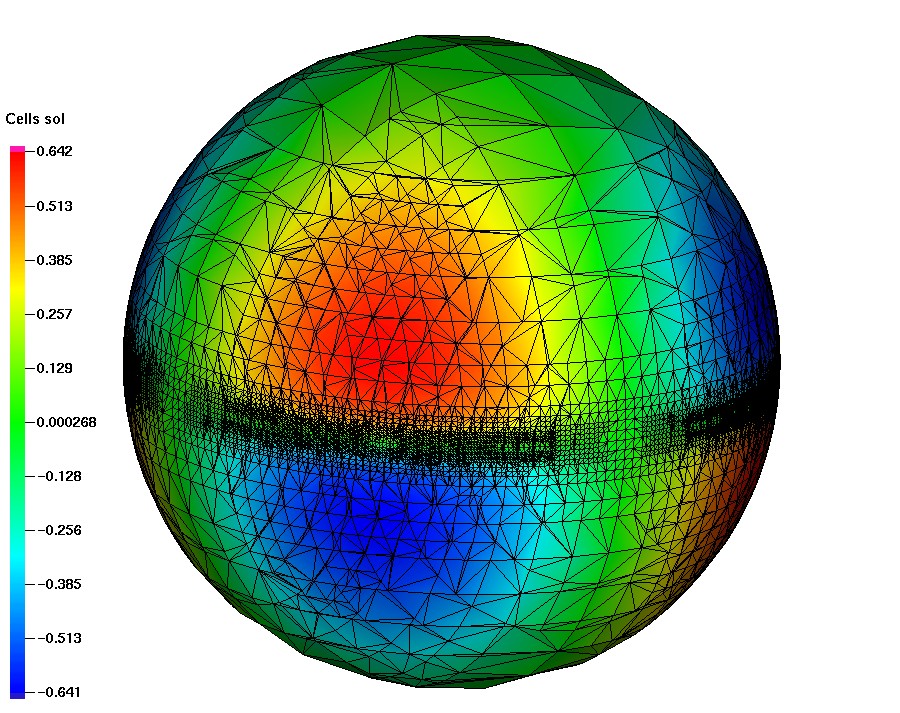}
\caption{\label{fig:conv1.5} The numerical solutions  for the convection-diffusion problem from Example~\ref{exam6} with $Pe=1$ (left) and $Pe=100$ (right). The surface meshes are shown after 7 (left) and 9 (right) steps of adaptive refinement.}
\end{center}
\end{figure}

We consider  the equation \eqref{problem} for  $Pe\le 100$ to be  non-singular perturbed. Hence, for this case of lower Peclet numbers, we expect the trace finite element to  behave similar to the case of Laplace-Beltrami equations with smooth solution. We add no stabilization in this case
and recover the expected $O(h^2)$ and $O(h)$ convergence rates on a sequence of uniformly refined grids in $L^2$ and $H^2$ surface norms,
respectively (not shown). Figure~\ref{fig:conv1} shows the error reduction plots for $Pe=1$ and $Pe=100$ if a mesh adaptation is performed based on the error indicator~\eqref{indicator}. For $Pe=100$, the error norms are approximately one order  bigger than for $Pe=1$, but  in both cases the convergence curves demonstrate optimal reduction rates versus the number of active degrees of freedom. The numerical solutions  and the surface meshes after several steps of adaptive refinement are displayed in Figure~\ref{fig:conv1.5}. For $Pe=100$, most of refinement happens closer to the equator of the sphere.

The results are shown for the trace finite element formulation with surface gradient \eqref{FEM_fg}. The results for the method with full gradient \eqref{FEM_fg} were largely similar.

\subsubsection{Higher Peclet number case}
Now we consider the equation \eqref{problem} and  $Pe\ge 10^3$.  For the growing Peclet number, the problem becomes increasingly  singular perturbed. For $Pe=10^3$ the internal layer becomes pronounced. Now and in all further experiments in this section, we apply SUPG stabilized trace finite element method~\eqref{FEM_SUPG}.

\begin{figure}[ht!]
\begin{center}
  \includegraphics[width=0.45\textwidth]{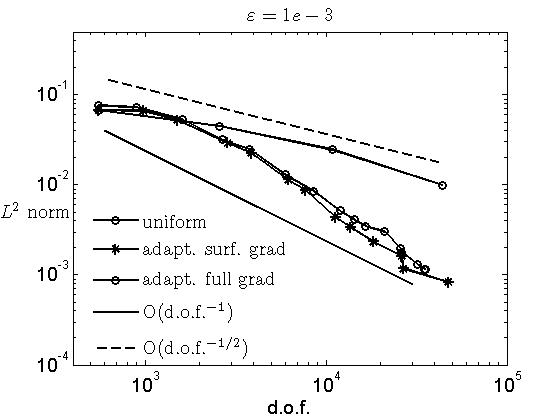}\qquad
  \includegraphics[width=0.45\textwidth]{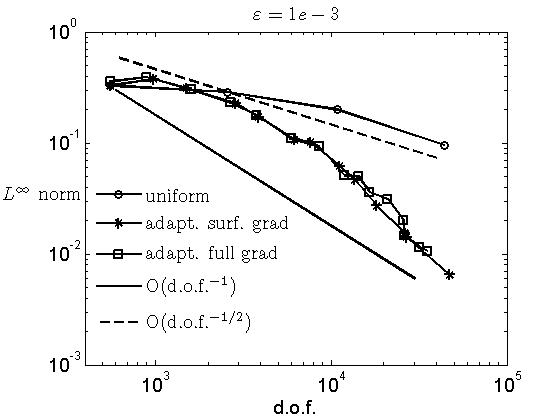}\\
\caption{\label{fig:conv2} Decrease of the  error for the ``surface gradient'' and ``full gradient''
versions of the trace FEM for the advection-diffusion problem from Example~\ref{exam6} with $Pe=10^3$. The error is measured in the $L^2(\Gamma_h)$ norm (left) and $L^\infty(\Gamma_h)$ norm (right). Both plots
compare the error of  the ``surface gradient'' and ``full gradient'' variants  of the method on a sequence of adaptively refined grids with the error of the ``surface gradient'' variant on the sequence of uniformly refined meshes.}
\end{center}
\end{figure}

Already for $Pe=10^3$ the uniform refinement doesn't lead to typical $O(h^2)$ convergence behavior
(unless the mesh is sufficiently fine to resolve the boundary layer.)
This is well seen from results shown in Figure~\ref{fig:conv2}. The error decrease on uniformly refined grid
is shown only for the ``surface gradient'' variant of the trace FEM \eqref{FEM}, since the results with \eqref{FEM_fg} were very similar. The adaptive refinement based on the indicator \eqref{indicator} leads, however, to the optimal error decrease with respect to the number of active d.o.f. These results are also demonstrated in Figure~\ref{fig:conv2}. The ``surface gradient'' and ``full gradient'' variants show very close results.

\begin{figure}[ht!]
\begin{center}
  \includegraphics[width=0.45\textwidth]{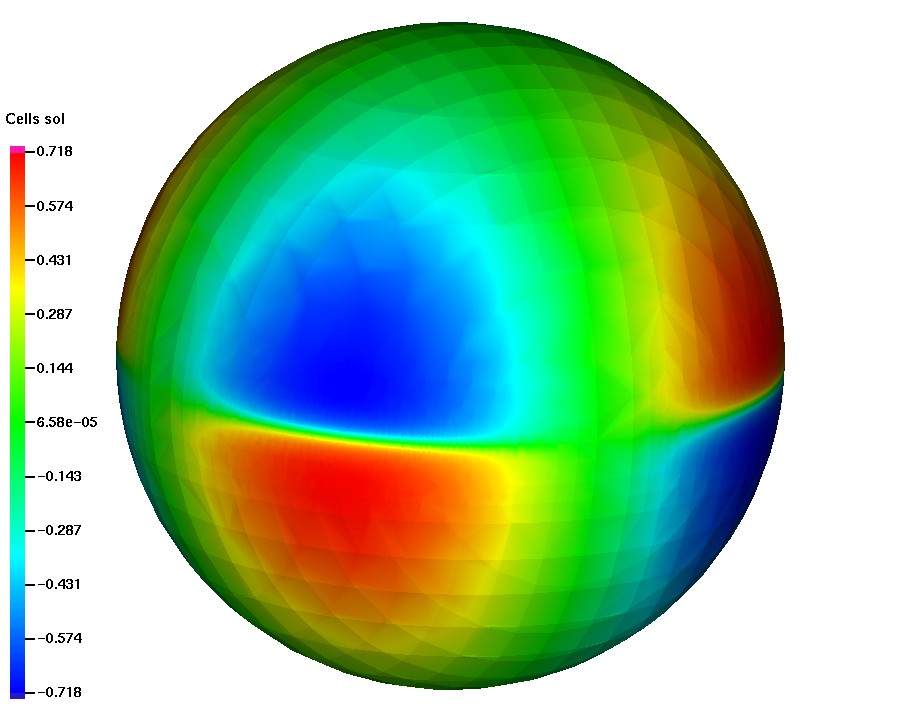}\qquad
  \includegraphics[width=0.45\textwidth]{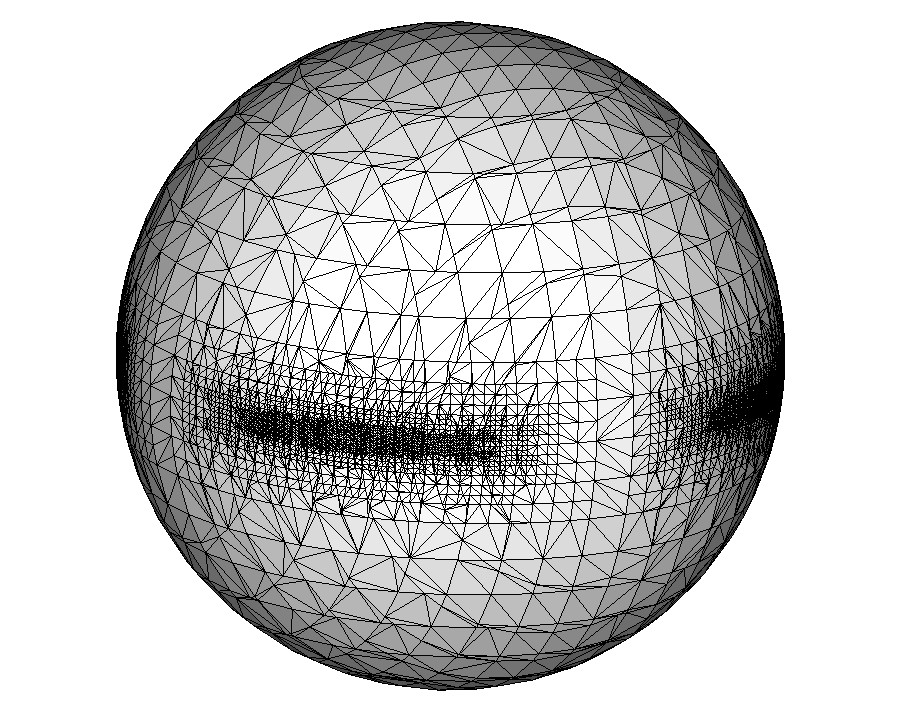}
\caption{\label{fig:conv3} Numerical solution and the mesh after 9 refinement steps for the convection-diffusion problem from Example~\ref{exam6} with $Pe=10^3$.}
\end{center}
\end{figure}

The numerical solution computed after 9 refinement steps and the corresponding surface mesh are demonstrated in Figure~\ref{fig:conv3}. The internal characteristic layer is well seen in the solution. The error indicator~\eqref{indicator} enforces an aggressive refinement in the regions of the layer.

\begin{figure}[ht!]
\begin{center}
  \includegraphics[width=0.45\textwidth]{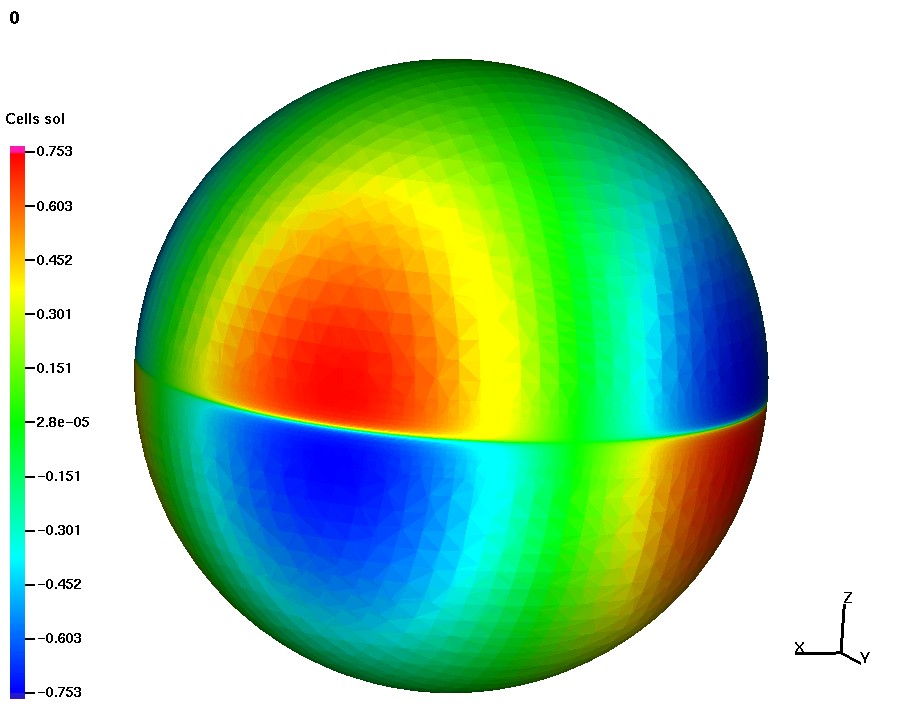}\qquad
  \includegraphics[width=0.45\textwidth]{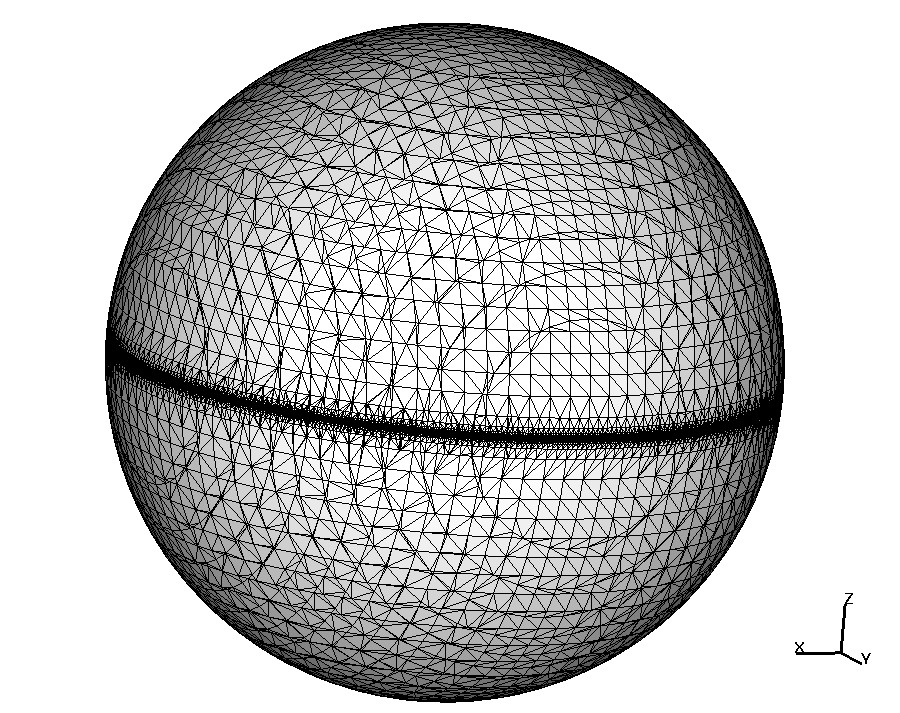}
\caption{\label{fig:conv4} Numerical solution and the Shishkin mesh for the advection-diffusion problem from Example~\ref{exam6} with the $Pe=10^4$.}
\end{center}
\end{figure}

Further, we consider the same problem  with the higher Peclet number equal to $10^{4}$.
This time, the adaptive refinement based on the error indicator  was not found to produce
optimal error reduction for the number of degrees of freedom up to 50000. Hence, we consider this problem to be a good test case for  layer fitted meshes. We choose \textit{Shishkin meshes} as one of the best studied class of meshes for singular-perturbed volumetric or planar problems. Shishkin meshes require an a priori knowledge of where a layer occurs and provide optimal convergence with respect to the total number of degrees of freedom~\cite{Shishkin}.

Let $N$ be the total number of nodal degrees of freedom available to discretize a singular-perturbed convection-diffusion problem. Assume that the solution to the problem has only an internal characteristic layer. Then to build a Shishkin mesh one defines a narrow band of width $O(\sqrt{\eps}\ln N)$ around the layer and 
considers a mesh which is uniform inside and outside the band and contains $O(N)$ nodes in the interior of the narrow band as well as in the rest of computational domain (see, e.g., ~\cite{Shishkin} for accurate definitions).
We extend this construction to the case of octree bulk meshes and the singular-perturbed problem
posed on a surface as follows. We build an initial octree mesh such that $h_{\min}=1/128$ was the size of cubes inside the strip $|x_3|\le 1/64$ (this defines our ``narrow band'' containing the layer). In the rest of the bulk domain, the grid was aggressively coarsened up to $h_{\max}=1/4$. The resulted number of active degrees of freedom for this initial mesh was $10356$.
Further, the mesh was uniformly refined two times, leading to layer fitted meshes with $22830$ and $101332$
active degrees of freedom. We note two deviations of our construction  from the classical notion of a Shishkin mesh: (i) We do not re-balance the mesh to account for the logarithmic factor in the width of a narrow band of a canonical Shishkin mesh; (ii) The mesh outside our narrow band is not completely uniform due to a transition region, which is necessary to keep the octree balanced (two neighboring cubes may differ in size at most by a factor of 2).

\begin{table}
\begin{center}
\caption{Convergence of numerical solutions to the advection-diffusion problem from Example~\ref{exam6}, with $Pe=10^4$,
 on a sequence of Shishkin meshes. \label{tab2}}\smallskip
\small
\begin{tabular}{r|llllll}\hline
\#d.o.f. & $L^2$-norm & rate & $H^1$-norm& rate & $L^\infty$-norm& rate \\ \hline\\[-2ex]
10356&	      4.870e-3 &      & 1.577e-0 &      & 6.725e-2 & \\
22830&	      1.428e-3 & 1.77 & 7.597e-1 & 1.05 & 1.718e-2 & 1.97\\
101332&   	  3.739e-4 & 1.93 & 3.761e-1 & 1.01 & 5.484e-3 & 1.65 \\  \hline
\end{tabular}
\end{center}
\end{table}

Figure~\ref{fig:conv4} shows the numerical solution computed on the finest surface Shishkin mesh.
Note that the internal layer is sharp and resolved. We observe no numerical oscillations in a vicinity of the layer. Table~\ref{tab2} presents the norms of the finite element error  on the sequence of the layer fitted meshes and corresponding convergence factors.

\begin{figure}[ht!]
\begin{center}
\includegraphics[width=0.45\textwidth]{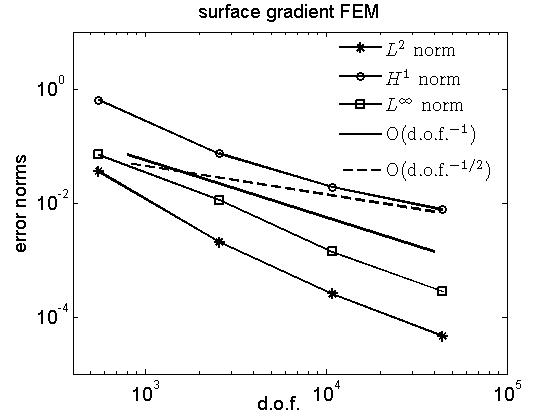}\qquad  \includegraphics[width=0.45\textwidth]{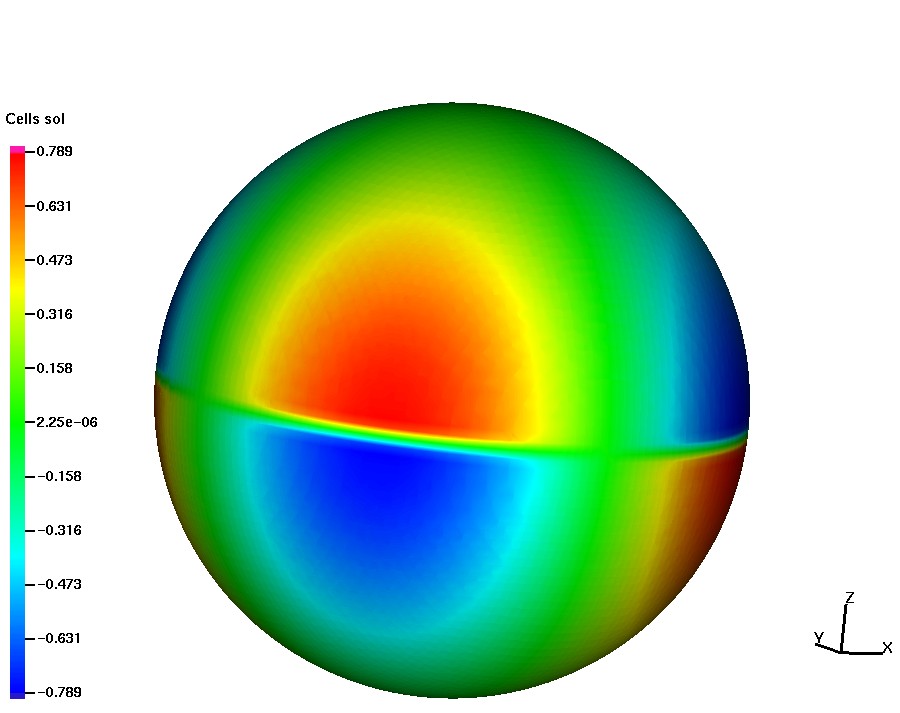}
\caption{\label{fig:conv6}Left: Decrease of the error in the $L^2(\Gamma_{ext})$, $H^1(\Gamma_{ext})$ and $L^\infty(\Gamma_{ext})$ norms  for the advection-diffusion problem from Example~\ref{exam6} with $Pe=10^{6}$. $\Gamma_{ext}$ is a part of the domain well separated from the internal layer.  The problem was solved on a sequence of uniformly refined meshes.   Right: Numerical solution for the advection-diffusion problem from Example~\ref{exam6} with the $Pe=10^6$ on a uniformly refined mesh. The internal layer is not resolved and clearly smeared over few mesh sizes. No spurious oscillations can be noted.}
\end{center}
\end{figure}

Finally, we solve the same problem, but now with $\eps=10^{-6}$, leading to $Pe=10^6$. The width of the internal layer is $O(\eps^{\frac12})$, and we are not attempting to resolve it with  a layer fitted mesh.
Instead we solve the problem on a sequence of uniformly refined grids. Since we used the SUPG stabilized formulation of the finite element method, we can expect that similar to a planar or a  volumetric cases
the layer would be smeared,  numerical oscillation damped, and finite element solution converges to the exact one
outside the layer. This is exactly what we observed for our trace finite element method. Thus, Figure~\ref{fig:conv6} demonstrates the computed solution as well as the finite element error decrease in a
part of the domain separated from the layer: All norms in Figure~\ref{fig:conv6} (left) were computed over
part of the sphere $\Gamma_{ext}:=\{\bx\in\Gamma\,:\, |x_3|>0.3\}$.

\section{Conclusions} \label{s_concl}

We studied a trace finite element method for partial differential equations
posed on hypersurfaces in $\mathbb{R}^3$. An extension to curves in  $\mathbb{R}^2$ is straightforward.
The paper demonstrates that using such standard computational
tools  as cartesian octree grids, a marching cubes method, and trilinear bulk finite elements leads to a
second order accurate method with a number of attractive features: The mesh
is unfitted to a surface; One uses standard finite element tools without any extension of equation from the surface to bulk domain; The method works for surfaces defined implicitly; The number of
active d.o.f. is optimal and comparable to methods in which $\Gamma$ is meshed directly; Optimal order of convergence in $H^1$ and $L^2$ norms is proved for quasi-uniform bulk grids. Moreover, due to the natural connection
to bulk elements, many tools and techniques well established for ``usual''  discretizations carry over to the surface case. In this paper, we experimented with several such techniques: adaptivity based on an error indicator, SUPG stabilization
for transport dominant equations, and layer fitted meshes. Numerical analysis supports experimental observations.
In forthcoming papers we plan to extend the adaptive octree trace finite element method to PDEs defined on evolving surfaces and 
 to apply it for the simulation of  flow and transport in fracture media.

\subsection*{Acknowledgments}
This work has been supported  by RFBR through the grants 12-01-33084, 14-01-00731, 14-01-00830 and by NSF through the Division of Mathematical Sciences grant 1315993.

\bibliographystyle{elsarticle-num}
\bibliography{ChernOlshan}
\end{document}